\newtheorem{remark}[theorem]{Remark}
\newcommand{\ie}{\textit{i.e.}{}}
\newcommand{\eg}{\textit{e.g.}{}}
\newcommand{\egc}{\textit{e.g.,}{}}
\newcommand{\ud}{\,\mathrm{d}}
\newcommand{\Or}{\mathcal{O}}
\newcommand{\wt}[1]{\widetilde{#1}}
\newcommand{\abs}[1]{\left\lvert#1\right\rvert}
\newcommand{\barint}{\kern4pt \raise3.4pt\hbox{\vrule height.6pt
    width7pt} \kern-11pt \int}
\newcommand{\bvec}[1]{\mathbf{#1}}
\newcommand{\Vion}{V_{\mathrm{ion}}}
\newcommand{\Vxc}{\hat{V}_{\mathrm{xc}}}
\newcommand{\vr}{\bvec{r}}
\newcommand{\CS}{\mathcal{C}}
\newcommand{\IS}{\mathcal{I}}
\newcommand{\RS}{\mathcal{R}}
\newcommand{\JS}{\mathcal{J}} 
\newcommand{\LS}{\mathcal{L}}
\newcommand{\setN}{\left\{1,\ldots,N\right\}}
\newcommand{\setne}{\left\{1,\ldots,n_e\right\}}
\newcommand{\CSt}{\wt{\CS}}
\newcommand{\rr}[1]{{#1}}
\title{\rr{Computing localized representations of the Kohn-Sham subspace via randomization and refinement}}
\author{
	Anil Damle\thanks{Department of Mathematics, University of California, Berkeley, Berkeley, CA 94720 Email: \texttt{damle@berkeley.edu}}\and
	Lin Lin\thanks{Department of Mathematics, University of California, Berkeley, Berkeley, CA 94720 and Computational Research Division, Lawrence Berkeley National Laboratory, Berkeley, CA 94720. Email: \texttt{linlin@math.berkeley.edu}}\and
	Lexing Ying\thanks{Department of Mathematics and Institute for Computational and Mathematical Engineering, Stanford University, Stanford, CA 94305. Email: \texttt{lexing@math.stanford.edu}}
}
\begin{document}

\maketitle

\begin{abstract}
  Localized representation of the Kohn-Sham subspace plays an important
  role in quantum chemistry and materials science.  The recently
  developed selected columns of the density matrix (SCDM) method [J.
  Chem. Theory Comput. 11, 1463, 2015] is a simple and robust procedure for
  finding a localized representation of a set of Kohn-Sham orbitals from
  an insulating system. The SCDM method allows the direct construction
  of a well conditioned (or even orthonormal) and localized basis for the
  Kohn-Sham subspace. The SCDM algorithm avoids the use of an  optimization
  procedure and does not depend on any adjustable parameters.   The most
  computationally expensive step of the SCDM method is a 
  column pivoted QR factorization that identifies the important columns
  for constructing the localized basis set.  In this paper, we develop a two
  stage approximate column selection strategy to find the important
  columns at much lower computational cost. We demonstrate the
  effectiveness of this process using the
  dissociation process of a BH$_{3}$NH$_{3}$ molecule, an alkane chain
  and a supercell with $256$ water molecules. Numerical results for the large collection of water molecules show that two stage localization
  procedure can be more than $30$ times faster than the original SCDM algorithm and compares favorably with the popular Wannier90 package.
\end{abstract}

\begin{keywords}
Localization, Kohn-Sham density functional theory, selected columns of the
density matrix, column pivoted QR factorization 
\end{keywords}

\begin{AMS}
65Z05, 65F30
\end{AMS}

\section{Introduction}
\label{sec:intro}

Kohn-Sham density functional theory
(DFT)~\cite{HohenbergKohn1964,KohnSham1965} is the most widely used
electronic structure theory for molecules and systems in condensed
phase. The Kohn-Sham orbitals (a.k.a. Kohn-Sham wavefunctions) are
eigenfunctions of the Kohn-Sham Hamiltonian and are generally
delocalized, \ie~each orbital has significant magnitude across the
entire computational domain. Consequently, the information about atomic
structure and chemical bonding, which is often localized in real space,
may be difficult to interpret directly from Kohn-Sham orbitals. 
The connection between the delocalized orbitals, and localized ones can
be established
through a \textit{localization} procedure, which has been
realized by various numerical methods in the
literature~\cite{FosterBoys1960,MarzariVanderbilt1997,WannierReview,Gygi2009,SCDM,ELiLu2010,OzolinsLaiCaflischEtAl2013,AquilantePedersenMerasEtAl2006}.
The common goal of these methods is to find a set of
orbitals that are localized in real space and span the Kohn-Sham
invariant subspace, defined as the subspace as that spanned by the Kohn-Sham orbitals. \rr{For simplicity, we restrict our discussion to isolated systems and omit the discussion of Brillouin zone sampling in this manuscript.}

Mathematically, the problem of finding a localized representation of the
Kohn-Sham invariant subspace can be formulated as follows. Assume
the collection of Kohn-Sham orbitals are discretized in the real space representation as a tall
and skinny matrix $\Psi\in \mathbb{C}^{N\times n_{e}}$ with orthonormal columns and where
$N\gg n_{e}$. We seek to compute
a unitary transformation $Q\in\mathbb{C}^{n_{e}\times n_{e}}$ such that
the columns of $\Phi=\Psi Q$ are
\textit{localized}, \ie~each column of $\Phi$ becomes a sparse vector with spatially localized support
after truncating entries with relative magnitude smaller than a prescribed threshold. Here, $N$ is the number of grid
points in the discrete real space representation of each Kohn-Sham
orbital, and $n_{e}$ is the number of orbitals. In the absence of spin
degeneracy,  $n_{e}$ is also the number of electrons in the system. 

For a general matrix $\Psi$ it may not be possible to construct such a
$Q$ and obtain $\Phi$ with the desired structure. However, when
$\Psi$ represents a collection of  Kohn-Sham orbitals of an insulating
system, such localized orbitals \rr{generally} exist. \rr{An important exception are topological insulators with non-vanishing Chern numbers \cite{hasan2010colloquium,Brouder2007} \textemdash here we restrict our discussion to systems where localized functions are known to exist.} Their construction can be
justified physically by the ``nearsightedness'' principle for electronic
matter with a finite HOMO-LUMO gap~\cite{Kohn1996,ProdanKohn2005}. The
nearsightedness principle can be more rigorously stated as the single
particle density matrix being exponentially localized along the
off-diagonal direction in its real space
representation~\cite{BenziBoitoRazouk2013,Kohn1996,Blount,Cloizeaux1964a,Cloizeaux1964b,
Nenciu,LinLu2015}. 

The recently developed selected columns of the density matrix (SCDM)
procedure \cite{SCDM} provides a simple, accurate, and robust way of
constructing localized orbitals. Unlike many existing
methods~\cite{FosterBoys1960,MarzariVanderbilt1997,ELiLu2010,OzolinsLaiCaflischEtAl2013,MustafaCohCohenEtAl2015}, the SCDM method requires no initial
guess and does not involve a non-convex optimization procedure. \rr{The core ideas behind it also readily extend to systems with Brillouin zone sampling \cite{SCDMk}.} The SCDM procedure constructs localized orbitals directly from a
column selection procedure implicitly applied to the density matrix.
Hence, the locality of the basis is a direct consequence of the locality of the
density matrix. The SCDM method can be efficiently performed via a
single column pivoted QR (QRCP) factorization. Since efficient
implementation of the QRCP is available both in serial and
parallel computational environments through the LAPACK \cite{lapack} and
ScaLAPACK~\cite{Scalapack} libraries, respectively, the SCDM method can be
readily adopted by electronic structure software packages. 

From a numerical perspective, the computational cost of a QRCP factorization scales as
$\Or(N n_{e}^{2})$. The basic form of QRCP \cite{GVL} is not able to take full
advantage of level 3 BLAS operations. Hence for matrices of the same
size, the QRCP factorization can still be relatively expensive compared
to level 3 BLAS operations such as general matrix-matrix
multiplication (GEMM). The computational cost of the single QRCP is not necessarily an issue when the SCDM procedure is used as a post-processing tool, but is a
potential concern when the SCDM procedure needs to be performed repeatedly.  This, for instance, could occur in geometry optimization and molecular dynamics calculations with hybrid exchange-correlation
functionals~\cite{WuSelloniCar2009,GygiDuchemin2012}, where a localized
representation of the Kohn-Sham invariant subspace needs to be
constructed in each step to reduce the large computational cost
associated with the Fock exchange operator. In fact, \cite{SCDM} demonstrates how our existing SCDM algorithm may be used to accelerate Hartree-Fock exchange computations. Therefore, here we focus on accelerating the SCDM computation itself.

Practically, any QRCP algorithm may be used within the SCDM procedure.
In the serial setting, this includes recently developed methods based on
using random projections to accelerate and block up the column selection
procedure \cite{Martinsson,Gu}. In the massively parallel setting one
may alternatively use the recently developed communication avoiding
rank-revealing QR algorithm \cite{DemmelRRQR}.

In this paper, we demonstrate that the computational cost of the SCDM
procedure can be greatly reduced, to the extent that the column
selection procedure is no longer the dominating factor in the SCDM
calculation.
This is based on the observation that SCDM does not really require the
Q and R factors from the QRCP. In fact, only the pivots from
the QRCP are needed.
More specifically, we develop a two-stage column selection procedure that
approximates the behavior of the existing SCDM procedure at a much lower
computational cost. Asymptotically, the computational cost is only dominated
by two matrix-matrix multiplications of the form $\Psi Q$ to
construct the localized orbitals, at least one of which is needed in any localization
procedure starting from the input $\Psi$ matrix. Notably, the only adjustable parameters we introduce are an oversampling factor and truncation threshold. Both of which may be picked without knowledge of the problems physical structure.

The approximate column selection procedure consists of two stages. First,
we use a randomized procedure to select a set of candidate columns
that may be used in an SCDM style localization procedure. The number of
candidate columns is only $\Or(n_{e} \log n_{e})$ and is much smaller than
$N$. We may use these candidate columns to quickly construct a basis for
the subspace that is reasonably localized. In some cases, this fast
randomized procedure may provide sufficiently localized columns.
Otherwise, we
propose a subsequent refinement procedure to improve the quality of the
localized orbitals.  This is achieved by using a series of QRCP
factorizations for matrices of smaller sizes that may be performed concurrently.
Numerical results for physical systems obtained from the Quantum
ESPRESSO~\cite{QE} package indicate that the two-stage procedure yields
results that are nearly as good as using the columns selected by a full
QRCP based SCDM procedure. For large systems, the computational time is
reduced by more than one order of magnitude.

The remainder of this paper is organized as follows. In Section
\ref{sec:prelim} we present both a brief introduction to Kohn-Sham DFT
and a summary of the existing SCDM algorithm. Section \ref{sec:algo}
discusses the new two stage algorithm we propose, and details both the
randomized approximate localization stage and the refinement of the
column selection. Finally, Section \ref{sec:numer} demonstrates the
effectiveness of the algorithm for various molecules.

\section{Preliminaries}
\label{sec:prelim}

For completeness we first provide a brief introduction to Kohn-Sham density
functional theory, and the SCDM procedure for finding a localized
basis for the Kohn-Sham subspace.

\subsection{Kohn-Sham density functional theory}

For a given atomic configuration with $M$ atoms at locations $\{R_{I}\}_{I=1}^{M}$, 
KSDFT solves the nonlinear eigenvalue problem
\begin{equation}
  \begin{split}
    &\hat{H}[\hat{\rho};\{R_{I}\}]\hat{\psi}_{i} = \varepsilon_{i} \hat{\psi}_{i},\\
    &\hat{\rho}(\vr) = \sum_{i=1}^{n_e} \abs{\hat{\psi}_{i}(\vr)}^2, \quad \int
    \hat{\psi}^{*}_{i}(\vr) \hat{\psi}_{j}(\vr) \ud \vr = \delta_{ij}.
  \end{split}
  \label{eqn:KS}
\end{equation}
For simplicity we omit the spin degeneracy. 
The number of electrons is $n_{e}$ and the eigenvalues
$\{\varepsilon_{i}\}$ are ordered non-decreasingly.  The lowest
$n_e$ eigenvalues $\{\varepsilon_{i}\}_{i=1}^{n_e}$ are called the occupied state energies, and 
$\{\varepsilon_{i}\}_{j>n_e}$ are called the unoccupied state
energies.  We assume 
$\varepsilon_{g}:=\varepsilon_{n_e+1}-\varepsilon_{n_e}>0$. Here
$\varepsilon_{n_e}$ is often called the highest occupied molecular
orbital (HOMO), $\varepsilon_{n_e+1}$ the lowest unoccupied
molecular orbital (LUMO), and hence
$\varepsilon_{g}$ the HOMO-LUMO gap.  \rr{For extended systems, if $\varepsilon_{g}$ is uniformly bounded away from zero as the system size increases,}
the quantum system is an insulating system~\cite{Martin2004}.  
The eigenfunctions $\{\hat{\psi}_{i}\}_{i=1}^{n_e}$ define the
electron density $\hat{\rho}(\vr)$, which in turn defines the Kohn-Sham
Hamiltonian 
\begin{equation}
  \hat{H}[\hat{\rho};\{R_{I}\}] = -\frac12 \Delta +
  \hat{V}_{c}[\hat{\rho}] + \Vxc[\hat{\rho}] + \Vion[\{R_{I}\}].
\label{eqn:ksdft}
\end{equation}
Here $\Delta$ is the Laplacian operator for the kinetic
energy of electrons, 
\begin{equation*}
  \hat{V}_{c}[\hat{\rho}](\vr) \equiv
  \int \frac{\hat\rho(\vr')}{\abs{\vr-\vr'}} \ud \vr'
  \label{}
\end{equation*}
is the Coulomb potential, and $\hat{V}_{c}$ depends linearly with
respect to the electron density $\hat{\rho}$.  $\Vxc[\hat{\rho}]$
depends nonlinearly with respect to $\hat{\rho}$, and characterizes the
many body exchange and correlation effect. $\Vion[\{R_{I}\}]$ is an
external potential depending explicitly on the ionic positions, and
describes the electron-ion interaction
potential and is independent of $\hat{\rho}$.  Because the eigenvalue
problem (\ref{eqn:KS}) is nonlinear, it is often solved iteratively by a
class of algorithms called self-consistent field iterations
(SCF)~\cite{Martin2004}, until~\eqref{eqn:KS} reaches
self-consistency.

In a finite dimensional discretization of Eq.~\eqref{eqn:ksdft}, let $N$
be the number of degrees of freedom. Using a large basis set such as the
plane-wave basis set, we have $N=c n_{e}$ and $c$ is a large constant
that is often $10^{2}\sim 10^{4}$. Due to this large constant, we
explicitly distinguish $N$ and $n_{e}$ in the complexity analysis below.
For large
scale systems, the cost for storing the Kohn-Sham orbitals is $\Or(N
n_e)$, and the cost for computing them is generally $\Or(N n_e^2)$
and scales cubically with respect to $n_e$.  In modern KSDFT
calculations the Hartree-Fock exact exchange term is also often taken
into account in the form of hybrid
functionals~\cite{PerdewErnzerhofBurke1996,Becke1993}.  The
computational cost for this step not only scales as $\Or(Nn_e^{2})$ but
also has a large pre-constant.

When the self-consistent solution of the Kohn-Sham equation is obtained,
the existence of finite HOMO-LUMO gap has important implications on the
collective behavior of the occupied Kohn-Sham orbitals
$\{\hat{\psi}_{i}\}_{i=1}^{n_e}$. Since any non-degenerate linear
transformation of the set of Kohn-Sham orbitals yields exactly the same
physical properties of a system, the physically relevant
quantity is the subspace spanned by the Kohn-Sham orbitals
$\{\hat{\psi}_{i}\}_{i=1}^{n_e}$. Various
efforts~\cite{FosterBoys1960,MarzariVanderbilt1997,WannierReview,Gygi2009,ELiLu2010,OzolinsLaiCaflischEtAl2013}
have been made to utilize this fact and to find a set of localized
orbitals that form a compressed representation of a Kohn-Sham subspace.
In other words, we find a set of functions
$\{\hat{\varphi}_{i}\}_{i=1}^{n_e}$ whose span is the same as the
span of $\{\hat{\psi}_{i}\}_{i=1}^{n_e}$.
Compared to each Kohn-Sham orbital $\hat{\psi}_{i}$ which is delocalized in
the real space, each compressed orbital $\hat{\phi}_{i}$ is often localized
around an atom or a chemical bond. Hence working with $\hat{\phi}_{i}$'s
can reduce both the storage and the computational cost.

Assume we have access to $\hat{\psi}_{j}(\vr)$'s evaluated at a set of
discrete grid
points $\{\vr_{i}\}_{i=1}^{N}$.  Let $\{\omega_{i}\}_{i=1}^{N}$ be a set
of positive integration weights associated with the grid points
$\{\vr_{i}\}_{i=1}^{N}$, then the discrete orthonormality condition
is given by
\begin{equation}
  \sum_{i=1}^{N} \hat{\psi}_{j}(\vr_{i}) \hat{\psi}_{j'}(\vr_{i}) \omega_{i} =
  \delta_{jj'}.
  \label{eqn:orthonormal_discrete}
\end{equation}
Let $\hat{\psi}_{j}=[\hat{\psi}_{j}(\vr_{1}), \hat{\psi}_{j}(\vr_{2}), \ldots,
\hat{\psi}_{j}(\vr_{N})]^{T}$ be a column vector, and $\hat{\Psi}=[\hat{\psi}_{1},  \ldots,  \hat{\psi}_{n_e}]$
be a matrix of size $N\times n_e$. We call $\hat{\Psi}$ the
\textit{real space representation} of the Kohn-Sham orbitals and define diagonal matrix
$W=\mathrm{diag}[\omega_{1},\ldots,\omega_{N}]$.

\rr{Our method requires the Kohn-Sham orbitals to be be represented on a set of real space grid points. This is the case for a plane-wave basis set, as well as other representations such as finite differences, finite elements and wavelets. For instance, if
the Kohn-Sham orbitals are represented using the plane-wave basis
functions, their real space representation can be obtained on a uniform
grid efficiently with the fast Fourier transform (FFT) technique and in this case $\omega_{i}$ takes the same constant value for all $i$. It is in this setting that our method is of particular interest. However, this procedure is also applicable to other basis sets such as Gaussian type orbitals or numerical atomic orbitals when a real space representation of the basis functions is readily available. Therefore, our method is amenable to most electronic structure software packages.}




We define $\Psi=W^{\frac12} \hat{\Psi}$ such that the discrete orthonormality
condition in Eq.~\eqref{eqn:orthonormal_discrete} becomes
$\Psi^{*}\Psi=I$, where $I$ is an identity matrix of size $n_e$.
We now seek a compressed basis for the span of $\Psi$, denoted by the
set of vectors $\Phi=[\phi_{1}, \ldots,  \phi_{n_e}]$ where each
$\phi_{i}$ is a sparse vector with spatially localized support after truncating entries with small
magnitudes. In such case, $\phi_{i}$ is called a localized vector. 

\subsection{Selected columns of the density matrix}

As opposed to widely-used procedures such as MLWFs \cite{WannierReview}, the key difference in
the SCDM procedure is that the localized orbitals $\phi_{i}$ are
obtained directly from columns of the density matrix $P = \Psi\Psi^*$.
The aforementioned nearsightedness principle states that, for
insulating systems, each column of the matrix $P$ is localized. As a result, selecting any linearly independent subset of $n_e$ of them
will yield a localized basis for the span of $\Psi.$ However, picking
$n_e$ random columns of $P$ may result in a poorly conditioned basis if,
for example, there is too much overlap between the selected
columns.  Therefore, we would like a means for choosing a well
conditioned set of columns, denoted $\CS = \left\{c_1,c_2,\ldots,c_{n_e} \right\},$ to use as the localized
basis. Intuitively we expect such a basis to select columns to minimize
overlaps with each other when possible.


\rr{This is algorithmically accomplished with a QRCP factorization (see, \egc~\cite{GVL}). More specifically, given a matrix $A$ a QRCP seeks to compute a permutation matrix $\Pi$ such that the leading sub-matrices $\left(A\Pi\right)_{1,\ldots,k,:}$ for any applicable $k$ are as well conditioned as possible. In particular, if we let $\CS$ denote the columns selected by the first $n_e$ columns of $\Pi$ then $A_{:,\CS}$ should be a well conditioned set of $n_e$ columns of $A.$}

\rr{In our setting, this means we would ideally compute an QRCP factorization of the matrix $P$ to identify $n_e$ well conditioned columns from which we may construct a localized basis. However, this 
would be highly costly since $P$ is a large matrix of size 
$N$. Fortunately, we may equivalently compute the
set $\CS$ by computing a QRCP factorization of $\Psi^*$ \rr{or, in fact, any matrix $U$ with orthogonal columns such that $P=UU^*.$} More
specifically, we compute
\begin{equation}
\label{eqn:qrcp}
\Psi^*\Pi = Q\begin{bmatrix} R_1 & R_2 \end{bmatrix},
\end{equation}
and the first $n_e$ columns of $\Pi$ encode $\CS.$}

The SCDM procedure to construct an orthonormal set of localized basis elements, denoted $\phi_i$ for $i=1,\ldots,n_e$, and collected as columns of the matrix $\Phi$ is presented in its simplest form in Algorithm \ref{alg:scdm}.
\begin{algorithm}
\caption{The SCDM algorithm}
\label{alg:scdm}
\begin{algorithmic}[1]
\Statex Given: the Kohn-Sham orbitals $\Psi$
\State Compute a column pivoted QR of $\Psi^*$, $\Psi^*\Pi = Q\begin{bmatrix} R_1 & R_2 \end{bmatrix}$
\State Compute $\Phi = \Psi Q$ or, alternatively, $\Phi^* = \begin{bmatrix} R_1 & R_2 \end{bmatrix}\Pi^*$
\Statex Output: a localized basis for the Kohn-Sham subspace $\Phi$
\end{algorithmic}
\end{algorithm} 
In such form the algorithm requires knowledge of the orthogonal factor
from the QRCP. However, an alternative description simply requires the
column selection $\CS.$ We may equivalently write the SCDM algorithm as
in Algorithm \ref{alg:scdm_no_q}. Note that in Algorithm
\ref{alg:scdm_no_q}, the cost of the QR factorization for the
matrix $\left(\Psi_{\CS,:}\right)^*$ is only $\Or(n_{e}^{3})$.
\begin{algorithm}
\caption{An alternative version of the SCDM algorithm}
\label{alg:scdm_no_q}
\begin{algorithmic}[1]
\Statex Given: the Kohn-Sham orbitals $\Psi$
\State Compute $\CS$ associated with a column pivoted QR of $\Psi^*$
\State Compute the QR factorization $\left(\Psi_{\CS,:}\right)^* = QR$
\State Compute $\Phi = \Psi Q$
\Statex Output: a localized basis for the Kohn-Sham subspace $\Phi$
\end{algorithmic}
\end{algorithm}

\begin{remark}
There are various equivalent ways to construct the SCDM
algorithm. While the simple presentation here differs slightly
from the original presentation \cite{SCDM} \rr{the two are mathematically equivalent up to a choice of sign for the columns of $\Phi$. The original presentation corresponds to the computation of a QR factorization of $\left(\Psi_{\CS,:}\right)^*$ via a Cholesky factorization of $\left(\Psi_{\CS,:}\right)\left(\Psi_{\CS,:}\right)^*.$ QR factorizations are not
unique, there is always ambiguity up to diagonal matrix with
entries on the unit circle. However, such an ambiguity does not have any
affect on the localization.} 
\end{remark}

\rr{This second interpretation allows us to briefly explain why this algorithm constructs localized orbitals. Let $D$ be a diagonal matrix with $\pm 1$ on the diagonal such that $DR$ has positive diagonal entries. This means that
\[
Q = \left(\Psi_{\CS,:}\right)^*R^{-1}D
\] 
where $R^{-1}$ is a Cholesky factor of $\left[\left(\Psi_{\CS,:}\right)\left(\Psi_{\CS,:}\right)^*\right]^{-1}.$ Importantly, $\left(\Psi_{\CS,:}\right)\left(\Psi_{\CS,:}\right)^* = P_{\CS,\CS}$ and, therefore, exhibits off diagonal decay so long as $P_{\CS,\CS}$ is well conditioned. This property is then inherited by $R^{-1}$ \cite{BenziBoitoRazouk2013}. Finally, since $P_{\CS,:} = \Psi\left(\Psi_{\CS,:}\right)^*$ 
\[
\Phi = P_{\CS,:}R^{-1}D
\]
we may conclude that it is well localized \textemdash $P_{\CS,:}$ is well localized and $R^{-1}D$ does not destroy that locality. Importantly, here we see that all the factorizations we are performing can be thought of as involving sub-matrices of $P$.}

The overall computational cost of the algorithm is $\Or(N n_e^2),$ and
practically the cost is dominated by the single QRCP factorization regardless of the version used. Another key feature of the
algorithm, especially for our modifications later, is that because we
are effectively working with the spectral projector $P,$ the method
performs equivalently if a different orthonormal basis for the range of
$\Psi$ is used as input. In physics terminology, the SCDM procedure is
gauge-invariant. Lastly, the key
factor in forming a localized basis is the selection of a well
conditioned subset of columns. Small changes to the selected columns, provided they remain nearly as well conditioned, may
not significantly impact the overall quality of the basis.

\section{The approximate column selection algorithm}
\label{sec:algo}

When the SCDM procedure is used as a post-processing tool for a single
atomic configuration, the computational cost is usually affordable. In
fact in such a situation, the most time consuming part of the
computation is often the I/O related to the $\Psi$ matrices especially
for systems of large sizes.  However, when localized orbitals need to be
calculated repeatedly inside an electronic structure software package,
such as in the context of hybrid functional calculations with
geometry optimization or \textit{ab initio} molecular dynamics
simulations, the computational cost of SCDM can become relatively large.
Here we present an algorithm that significantly accelerates the SCDM
procedure.

The core aspect of the SCDM procedure is the column selection procedure. Given a set of appropriate columns the requisite orthogonal transform to construct the SCDM may be
computed from the corresponding rows of $\Psi$, as seen in Algorithm~\ref{alg:scdm_no_q}. Here we present a two stage procedure for accelerating this selection of columns and hence the computation
of $\Phi$. First, we construct a set of approximately localized orbitals
that span the range of $\Psi$ via a randomized method that requires only
$\Psi$ and the electron density $\rho,$ though if $\rho$ is not given it may be computed directly from $\Psi$ without increasing the asymptotic computational complexity.  We then use this approximately localized basis as the
input for a procedure that refines the selection of columns from which the localized basis is ultimately constructed. This is done by using the approximate locality to carefully partition the column selection
process into a number of small, local, QRCP factorizations.
Each small QRCP may be done in
parallel, and operates on matrices of much smaller dimension than
$\Psi.$

\subsection{Approximate localization}

The original SCDM procedure, through the QRCP, examines all $N$ columns
of $\Psi^*$ to decide which columns to use to construct $Q$. However,
physical intuition suggests that it is often not necessary to visit all
columns to find good pivots. For instance, for a molecular system in
vacuum, it is highly unlikely that a pivot comes from a column of the
density matrix corresponding to the vacuum space away from the molecule.
This inspires us to accelerate the column selection procedure by
restricting the candidate columns.

This is accomplished by generating $\Or\left(n_e\log n_e\right)$ 
independent and identically distributed (i.i.d.) random sample columns,
using the normalized electron density as the probability distribution
function (pdf).  Indeed, if a column of the density matrix corresponds
to the vacuum, then the electron density is very small and hence the
probability of picking the column is very low. In statistics this
corresponds to leverage score sampling, see, \egc~\cite{MahoneyDrineas}. This randomized version of the SCDM
algorithm is outlined in Algorithm~\ref{alg:rand}.


\begin{algorithm}
\caption{Computing an approximately localized collection of basis vectors}
\label{alg:rand}
\begin{algorithmic}[1]
\Statex Given: Kohn-Sham orbitals $\Psi,$ electron density $\rho,$ concentration $\gamma,$ and failure probability $\delta$
\State Sample $(n_e / \gamma) \log n_e / \delta$ elements from $\setN$ based on the discrete distribution
\[
\Pr\left(\left\{j\right\}\right) = \rho(j)/n_e
\]
and denote this set $\CSt$
\State Compute the column pivoted QR factorization $$\left(\Psi_{\CSt,:}\right)^*\Pi = QR$$
\State Form approximately localized basis $\wt{\Phi} = \Psi Q$
\end{algorithmic}
\end{algorithm} 

To complete our discussion of Algorithm \ref{alg:rand} we must justify the sub-sampling procedure used to select $\CSt$. In order to do so we introduce a simple model for the column selection procedure based on the idea that columns ``similar'' to the ones selected by Algorithm~\ref{alg:scdm} will work well to compute an approximately localized basis. Our underlying assumption is that a set of columns will serve to approximately localize the basis if it contains at least one column in each region where one of the $\phi_i$ is large. Because the $\phi_i$ constructed via the SCDM procedure decay exponentially, this is analogous to saying that any column associated with a grid point close enough to the ``true'' grid point used will suffice. Though, by avoiding explicit use of spatial relations of grid points our algorithm and its parameters are not dependent on the physical geometry.

To codify this postulate, we let $\IS_i \subset \setN$ be the smallest non-empty set such that 
\begin{equation}
\sum_{j\in\IS_i} \lvert\phi_i(j)\rvert^2 \geq \gamma,
\label{eqn:gamma}
\end{equation}
where $\gamma \in (0,1)$. If multiple such sets exist we select the one
that maximizes ${\sum_{j\in\IS_i} \lvert\phi_i(j)\rvert^2}$. Now, we may write our assumption more concretely: a column $c_i \in \setN$ suffices to approximately construct $\phi_i$ if it is contained in $\IS_i$. \rr{Taking $\gamma$ to be sufficiently small would enforce adequate sampling to ensure the columns selected by Algorithm~\ref{alg:scdm_no_q} are selected to be part of $\CSt.$ However, in practice this is not necessary for the construction of a localized basis and by choosing a larger $\gamma$ we allow for other columns near the peak of $\phi_i$ to act as good surrogates.} 


Under this assumption, to approximately localize the basis, we must
simply ensure that $\CSt$ contains at least one distinct column in each of the sets
$\IS_1,\IS_2,\ldots,\IS_{n_e},$ \ie~we need a one to one matching between sets and
columns. Theorem \ref{thm:sample} provides an upper bound on the
required cardinality of $\CSt$ to ensure it may be used to approximately
localize the basis with high probability. We do require an additional
mild assumption that ensures the sets $\IS_i$ do not simultaneously
overlap significantly and have small support. 
\begin{theorem}
\label{thm:sample}
Let $\eta$ be the largest constant such that there exist disjoint subsets $\IS^s_i \subseteq \IS_i, \; i=1,\ldots,n_e$ each satisfying 
\[
\sum_{j\in\IS^s_i} \lvert\phi_i(j)\rvert^2 \geq \eta\sum_{j\in\IS_i}
\lvert\phi_i(j)\rvert^2.
\]
The set $\CSt$ constructed by sampling $$m \geq  \left(n_e / \eta\gamma \right) \log n_e / \delta$$ elements with replacement from $\setN$ based on the discrete distribution 
\[
\Pr\left(\left\{j\right\}\right) = \rho(j)/n_e
\]
contains an unique element in each $\IS_i$ for $i=1,\ldots,n_e$ with probability $1-\delta.$
\end{theorem}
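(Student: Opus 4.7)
The plan is to lower-bound the probability that a single random sample lands in the disjoint sub-region $\IS_i^s$ for each $i$, then use independence of the samples together with a union bound over $i = 1, \ldots, n_e$ to control the probability of missing any one of them. Since the $\IS_i^s$ are disjoint, hitting each of them with a sample automatically produces distinct elements, one per $\IS_i$, which is what the statement requires.

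First, I would compute, for a single draw $J$ with $\Pr(J = j) = \rho(j)/n_e$, the probability that $J \in \IS_i^s$. Using $\rho(j) = \sum_{k=1}^{n_e} \lvert\phi_k(j)\rvert^2$ and dropping all but the $k = i$ term yields
\[
\Pr(J \in \IS_i^s) \;\geq\; \frac{1}{n_e}\sum_{j \in \IS_i^s} \lvert\phi_i(j)\rvert^2 \;\geq\; \frac{\eta}{n_e}\sum_{j \in \IS_i} \lvert\phi_i(j)\rvert^2 \;\geq\; \frac{\eta \gamma}{n_e},
\]
where the second inequality uses the defining property of $\eta$ and the third uses \eqref{eqn:gamma}.

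Next, by independence of the $m$ samples, the probability that none of them lies in $\IS_i^s$ is at most $(1 - \eta\gamma/n_e)^m \leq \exp(-m\eta\gamma/n_e)$. A union bound over $i = 1, \ldots, n_e$ shows that the probability some $\IS_i^s$ is missed is at most $n_e \exp(-m\eta\gamma/n_e)$. Setting this bound to $\delta$ and solving for $m$ gives precisely the threshold $m \geq (n_e/\eta\gamma)\log(n_e/\delta)$ stated in the theorem. On the complementary event, each $\IS_i^s$ contains at least one sample; since the $\IS_i^s$ are pairwise disjoint and contained in their respective $\IS_i$, the selected samples provide distinct (``unique'') representatives, one inside each $\IS_i$.

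The argument is quite short and the main subtlety, rather than a technical obstacle, is the use of $\eta$: the overlap condition is what lets us pass from ``probability mass on $\IS_i$'' (which could be double-counted across different $i$ if the $\IS_i$ overlap heavily) to ``probability mass on disjoint $\IS_i^s$,'' so that summing $\lvert\phi_i(j)\rvert^2$ only over $\IS_i^s$ yields a clean per-index lower bound from $\rho$. Everything else reduces to the standard coupon-collector-style calculation above.
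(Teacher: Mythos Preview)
Your proof is correct and follows essentially the same route as the paper: lower-bound $\Pr(J\in\IS_i^s)\ge\eta\gamma/n_e$ via $\rho(j)\ge|\phi_i(j)|^2$, use independence to bound the miss probability for a fixed $i$, take a union bound over $i$, and invoke disjointness of the $\IS_i^s$ to guarantee distinct representatives. The only cosmetic difference is that you pass through the inequality $(1-\eta\gamma/n_e)^m\le e^{-m\eta\gamma/n_e}$ explicitly, whereas the paper leaves that step implicit when solving for $m$.
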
 
\begin{proof}
Let $\mathcal{F}$ be the event that $\CSt$ does not contain a distinct element in one of the sets $\IS_i.$ We may write
\[
\Pr\left(\mathcal{F}\right) \leq \Pr\left(\left\{\CSt \cap \IS^s_i = \emptyset \text{ for some } i\right\}\right)
\]
because requiring $\CSt$ to contain an element in each $\IS^s_i$ implies that $\CSt$ contains a distinct element in each $\IS_i$. Subsequently, by a union bound
\[
\Pr\left(\mathcal{F}\right)\leq \sum_{i=1}^{n_e} \Pr\left(\left\{\CSt \cap \IS^s_i = \emptyset\right\}\right).
\]
The event $\left\{\CSt \cap \IS^s_i = \emptyset\right\}$ is simply the probability that none of the $m$ samples fall in $\IS^s_i.$ Because
\[
\rho(j) = \sum_{i=1}^{n_e} \lvert \phi_i(j)\rvert^2,
\]
we may find the lower bound for the probability of selecting an element $j$ in $\IS_i$ as
\[
\Pr\left(\left\{j \in \IS^s_i\right\}\right) \geq \eta\gamma/n_e.
\]
Consequently, 
\[
\sum_{i=1}^{n_e} \Pr\left(\left\{\CSt \cap \IS^s_i = \emptyset\right\}\right) \leq n_e (1-\eta\gamma/n_e)^m
\]
and to ensure the probability of missing any of the sets $\IS^s_i$ is less than $\delta$ we may simply enforce
\[
m \geq \frac{n_e}{\eta\gamma} \log n_e / \delta.
\]
\end{proof}

\begin{remark}
The parameter $\eta$ is
necessary in the proof, but
algorithmically we simply assume it to be one, alternatively, one could
consider choosing $1 / (\gamma \eta)$ rather than just $1 / \gamma$ as the
oversampling factor.
\end{remark}

If, for example, we take $\gamma = 1/2$ and $\eta = 1/2$ this bound says
$4n_e\log n_e/\delta$ samples suffices for the approximate localization
procedure. As expected, when either the failure probability $\delta$ or
the cardinality of $\IS_i$ go to one the required number of samples
grows. Furthermore, since $\gamma = \min_{i} \max_{j} \lvert \phi_i(j) \rvert^2$ corresponds to each of the sets $\IS_i$ containing a single point, that is a lower bound on how small $\gamma$ can be theoretically. We remark that this theoretical bound may be pessimistic for two reasons. One is its use of the union bound. The other is the introduction of $\eta$. The disjoint requirement simplifies the assignment of selected columns to sets for the proof, but is a relaxation of what is really needed.

\subsection{Accelerating the SCDM procedure using an approximately
localized basis}

Once the approximate localized orbitals $\wt{\Phi}$ are obtained, we
would like to perform a refinement procedure to further localize the
basis. We do this by taking advantage of the locality of the input to
the SCDM procedure. \rr{In Algorithm~\ref{alg:rand} we approximate the behavior of the SCDM algorithm by restricting the number of columns of $\Psi^*$ that the QRCP factorization can select from. However, once we have a somewhat localized basis we can efficiently take more columns of $\Psi^*$ into consideration. This allows us to better approximate the original SCDM algorithm and, therefore, construct a more localized basis. We accomplish this with a procedure that resembles the tournament pivoting strategy for computing a QRCP. We first compute a bunch of small QRCP factorizations, each involving columns associated with the support of a subset of the rows of $\wt{\Phi}^*$. This is computationally feasible because the rows are already somewhat localized. Lastly, because this procedure generates more candidate columns than needed we perform one final QRCP to try and get as well conditioned a set of columns of $\Psi^*$ as possible.}

Algorithmically, we first need to select a superset of the ultimately
desired columns from which to select the final columns used in the
localization. To construct such a superset we consider each approximately localized
orbital and how we may refine it. Each orbital, once approximately
localized, only exerts influence on, \ie~significantly overlaps with,
nearby localized orbitals. Hence, we may refine the selected columns locally. This
means that for each orbital we may simply figure out which orbitals
it substantially overlaps with and compute a QRCP on just those orbitals (rows) of
$\wt{\Phi}^*$ while simultaneously omitting columns with small norm over
those rows. This process will yield a small number of selected columns
that we add to a list of potential candidate columns for the final
localization. However, because we repeat this process for each localized
orbital we might have more than $n_e$ total candidate columns by a small multiplicative factor. Therefore, we perform one final column pivoted QR factorization on these candidate columns to select the final set $\CS$. 

In principle, while the localized orbitals get small on large portions of the domain, they are not actually zero. Hence, for any given $\epsilon$ we say two orbitals substantially overlap if there is any spatial point where they both have relative magnitude greater than $\epsilon$. We find in practice that choosing $\epsilon$ close to the relative value at which one considers an orbital to have vanished suffices, though taking $\epsilon \rightarrow 0$ one recovers the original SCDM algorithm. Importantly, this parameter is completely independent of the geometry of the problem under consideration and thus does not require detailed knowledge of the system to set.

We detail our complete algorithm for computing the orthogonalized SCDM from approximately localized input in Algorithm~\ref{alg:refine}. The only parameter is how small a column is for it to be ignored. At a high level, the goal is simply to generate additional candidate
columns by allowing each orbital to interact with its immediate
neighbors. Provided that each orbital is only close to a small number of
others, and that the approximate localization is sufficient this procedure may be performed quickly. 

\begin{algorithm}
\caption{Refining an approximately localized collection of basis vectors}
\label{alg:refine}
\begin{algorithmic}[1]
\Statex Given:  approximately localized Kohn-Sham orbitals $\wt{\Phi}$ and column tolerance $\epsilon$
\State $\JS_i = \left\{j\in \setN \mid \lvert \wt{\phi_i}\rvert > \epsilon \max_{k} \lvert \wt{\phi_i} (\vr_k) \rvert \right\}$ for $i = 1,\ldots,n_e$
\For{$i = 1,\dots, n_e$}
  \State Set $\RS_i = \left\{j \in \setne \mid \JS_i \cap \JS_j \neq \emptyset \right\}$ 
  \State Set $\displaystyle \LS_i = \bigcup_{j\in \RS_i} \JS_j$
  \State Compute a column pivoted QR factorization of $\left(\left.\rr{\wt{\Phi}}\right.^*\right)_{\RS_i,\LS_i}$ and denote the pivot columns $\CS_i$
\EndFor
\State Set $\CSt = \cup_i \CS_i$
\State Compute the column pivoted QR factorization $$\left(\rr{\wt{\Phi}}_{\CSt,:}\right)^*\Pi = QR$$
\State Form the localized basis $\Phi = \wt{\Phi} Q$
\end{algorithmic}
\end{algorithm} 

To illustrate the behavior of this algorithm, we sketch the behavior of Algorithm~\ref{alg:refine} in two cases. In one case, after the approximate localization, we have two sets of orbitals whose support sets after truncation are disjoint. This is shown in Figure~\ref{fig:disjoint}. Here, simply computing two independent QRCP factorizations is actually equivalent to computing the QRCP of the entire matrix. As we see Algorithm~\ref{alg:refine} partitions the orbitals into two sets and then only considers the columns with significant norm over the orbital set. In the second case, \rr{illustrated in Figure~\ref{fig:chain}}, we have a chain of orbitals whose support set after truncation forms a connected region in the spatial domain. In this situation we do not actually replicate the computation of a QRCP of the whole matrix, but rather for each orbital we compute a local QRCP ignoring interactions with distant orbitals. More specifically, any column mostly supported on a given orbital will be minimally affected by orthogonalization against columns associated with distant orbitals. Therefore, we may simply ignore those orthogonalization steps and still closely match the column selection procedure. 

\begin{figure}[ht!]
  \centering
  \includegraphics[width = 1\textwidth]{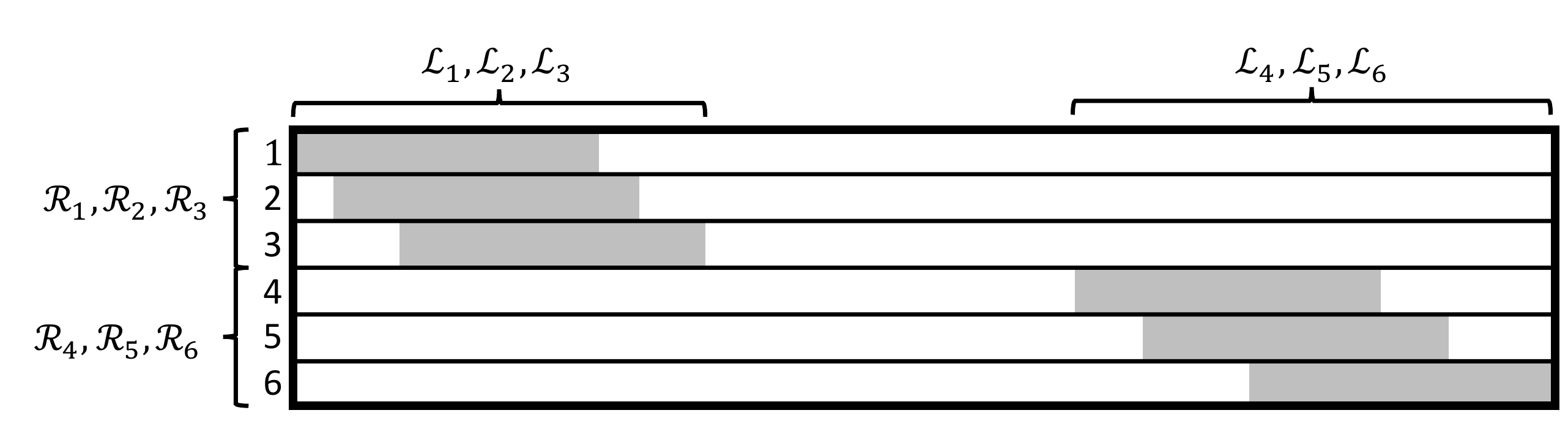}
  \includegraphics[width = .6\textwidth]{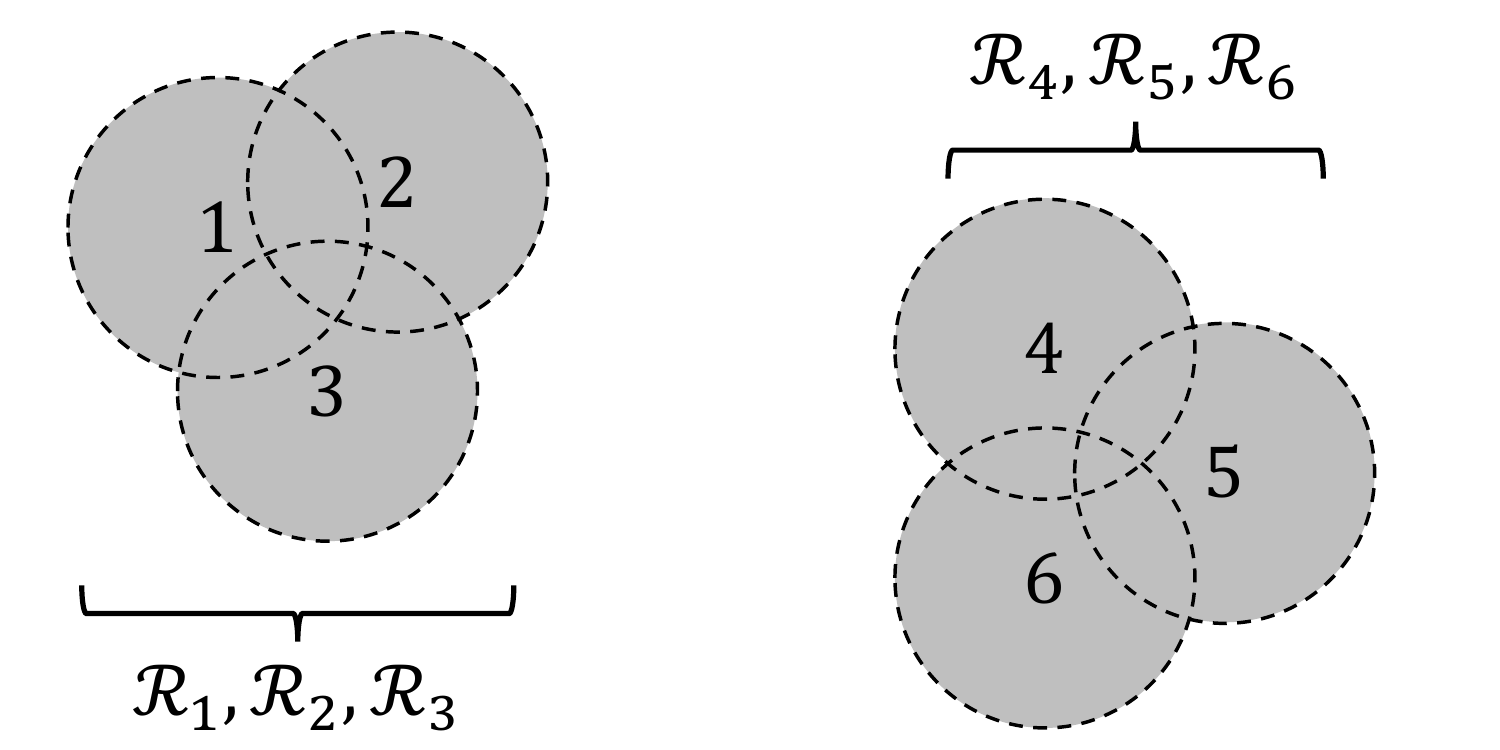}
  \caption{Matrix (top) and physical domain (bottom) associated with two collections of approximately localized orbitals whose support (lightly shaded region) is disjoint after truncation.}
  \label{fig:disjoint}
\end{figure}

\begin{figure}[ht!]
  \centering
  \includegraphics[width = 1\textwidth]{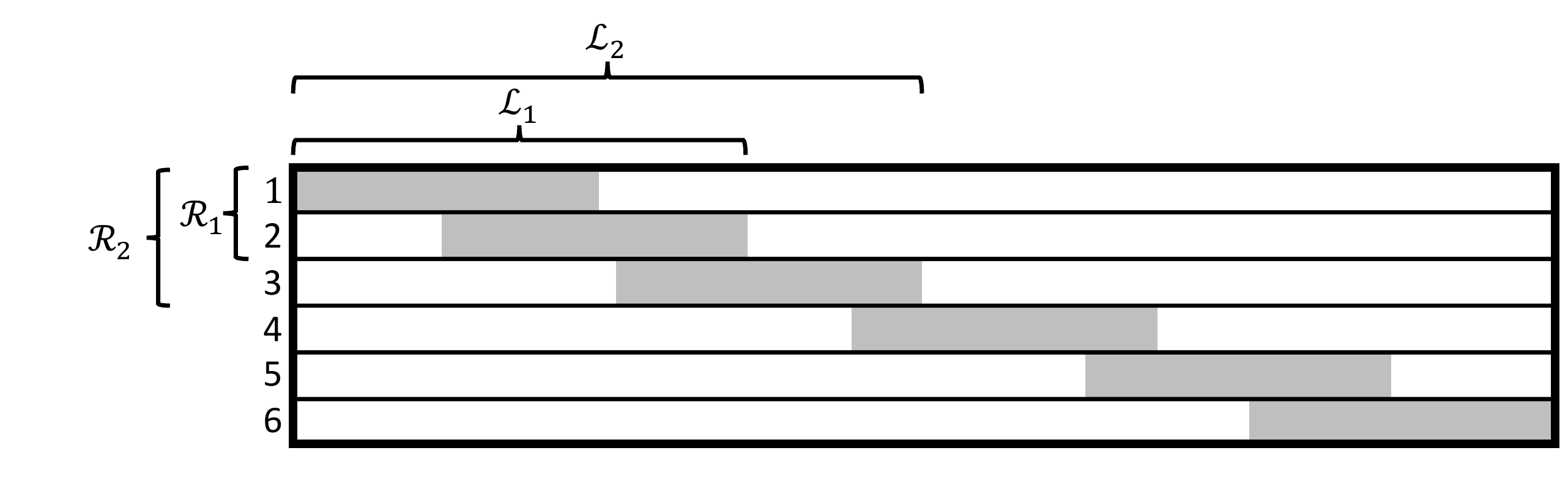}
  \includegraphics[width = .6\textwidth]{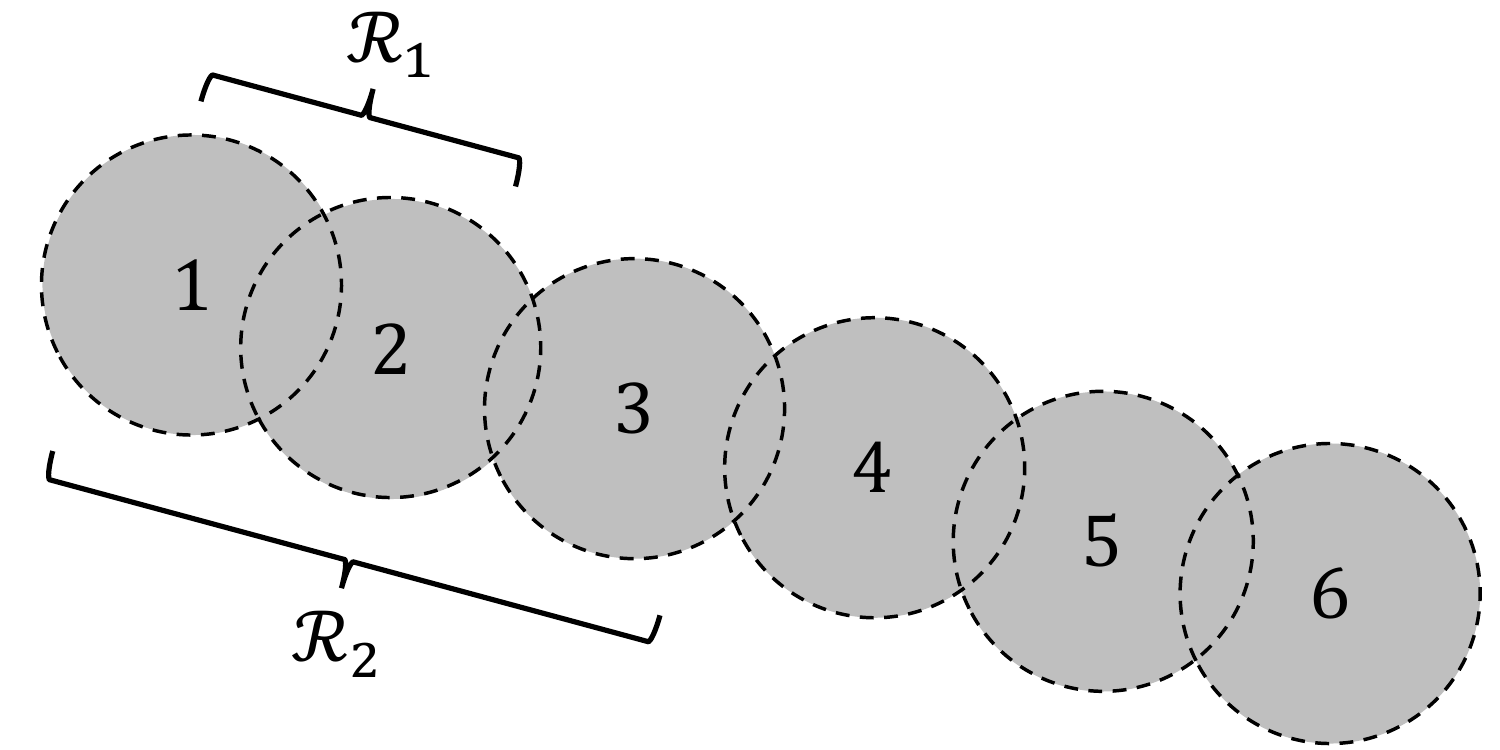}
  \caption{Matrix (top) and physical domain (bottom) associated with a chain of approximately localized orbitals whose support (lightly shaded region) is connected after truncation.}
  \label{fig:chain}
\end{figure}

\begin{remark}
In Algorithm~\ref{alg:refine} it may sometimes be the case that several of the $\RS_i$ are identical (\eg~as in Figure~\ref{fig:disjoint}). In this case, as a matter
of practical performance optimization, one may simply skip instances of
the loop over $i$ that would be computing a QRCP of the exact same
matrix as a prior instance of the loop. Similarly, if there is a set
$\mathcal{S}\subset \setne$ such that $\left(\cup_{i\in \mathcal{S}}
\RS_i \right) \cap \left(\cup_{i\in\mathcal{S}^c} \RS_i\right) = \emptyset$ and $\lvert \cup_{i\in \mathcal{S}} \RS_i \rvert$ is small, we may combine the instances of the loop corresponding to $i\in\mathcal{S}$ into a single small QRCP by simply using $\cup_{i\in \mathcal{S}} \RS_i$ as the set of rows. This avoids redundant work if a small collection of orbitals just interact amongst themselves and are disjoint from all others.   
\end{remark}

\subsection{Computational complexity}

The computational cost of Algorithms \ref{alg:rand} and \ref{alg:refine}
is formally limited by the cost of computing a single general matrix-matrix
multiplication (GEMM) $\Psi Q$ at cost $\Or(n_e^2 N).$ However, this single BLAS 3
operation must appear in any localization procedure starting with the
$\Psi$ matrix as an input.
This GEMM operation is also highly optimized both in sequential and parallel
computational environments. Furthermore, at large scale one can
approximate the product with $\Or(n_e N)$ complexity using sparse linear algebra, since the support of the result
is sparse and one may infer the support based on the column indices from
which $Q$ is built. For these reasons, we let $T_{mult}$ represent the
cost of computing $\Psi Q$. 

Using this notation, the computational cost of Algorithm~\ref{alg:rand}
is $$\Or(n_e N \log n_e + n_e^3 \log n_e) + T_{mult}.$$ More
specifically, the random selection of columns costs $\Or(N n_e \log
n_e)$ and the subsequent QR factorization costs $\Or(n_e^3 \log n_e).$
If we assume that the support of the approximately localized basis used
as input for Algorithm~\ref{alg:refine} and the number of nearby orbital
centers are bounded by some constant independent of $N$ and $n_e,$ which
is reasonable for models where the molecule is growing and the
discretization quality remains constant, the computational cost is
$$\Or(n_e N + n_e^3) +T_{mult}.$$ Under these assumptions each of the,
at most, $n_e$ small QR factorizations has constant cost. 
However, the procedure for finding $\JS_i$ introduces a dependency on $N$.
While the computational cost of the
refinement procedure and randomized algorithms are broadly similar, the
randomized algorithm is significantly faster in practice because drawing
the random samples is cheaper than the support computations needed in
the refinement algorithm.

\rr{Lastly, our algorithms are memory efficient. In fact, for practical purposes their memory cost is bounded by the cost of storing $\Psi$ plus a few work arrays of size $N.$ Besides the storage for $\Psi,$ all of the matrices we operate on cost at most $\Or(n_e^3 \log n_e)$ to store and may be discarded after use. Furthermore, the QR factorizations and matrix multiplication may be done in place excepting a few work arrays of length $N.$}

\section{Numerical examples}
\label{sec:numer}

To demonstrate the performance of our method we use three examples that
capture the different facets of our algorithm.  The first example is the
dissociation of a BH$_{3}$NH$_{3}$ molecule, and the second example is
the alkane chain. We select these two examples not because they are
computationally expensive, but that they clearly demonstrates that 
our approximate localization algorithm is effective in two very
different regimes.  In particular, the effectiveness of the algorithm is
independent of whether localized
orbitals form one single group or multiple disconnected groups. Our third example is a large supercell with $256$ water
molecules. We demonstrate the performance gains over the existing SCDM
method and provide a comparison with Wannier90 \cite{wannier90}.

In all of the examples here we assume we have access to the electron density
$\rho$ from the electronic structure calculation, and therefore 
exclude its computation from the timings of our randomized method. \rr{We use $3 n_e \log n_e$ samples in the randomized algorithm, corresponding to $\gamma = 1/3,$ for all of the experiments.} 
Furthermore, to more clearly illustrate the advantages of our method, we
separately report timings for computing the orthogonal
transform $Q$ that localizes the orbitals and subsequent computation of the localized orbitals by a single matrix product. Here
we only consider the orthogonalized SCDM, as discussed in this paper.
For the refinement algorithm we set the relative truncation threshold at $\epsilon = 5\times 10^{-2}$ and we observe that this is sufficient for our new algorithm to closely match the results of the existing algorithm. Finally, in all of the experiments here we use $2.5 \times 10^{-2}$ as the relative truncation threshold of the localized orbitals when counting the fraction of entries that are non-zero. This measure of ``locality'' has the advantage of not depending on the geometry of the physical system. However, for completeness we also provide spread computations in the final example. \rr{Prior work validates the expected exponential decay of the orbitals by varying the truncation threshold \cite{SCDM}.} 

All numerical results shown were run on a quad-socket
Intel Xeon E5-4640 processor clocked at 2.4 GHz with 1.5 TB of RAM and our algorithms were implemented in MATLAB R2015a. Our code is sequential and the only multi-threading is in the \mbox{LAPACK} and BLAS routines called by MATLAB. \rr{The storage cost of all the algorithms presented here is $\Or(n_e N),$ which is also the cost to store $\Psi$ in memory. In our largest example a copy of $\Psi$ cost roughly 60 GB to store.} The Kohn-Sham orbitals were computed using Quantum ESPRESSO \cite{QE} and VMD \cite{VMD} was used for plotting orbitals and molecules in the alkane and water examples.

\subsection{BH$_{3}$NH$_{3}$}
First, we demonstrate the performance of the approximate column
selection method for the dissociation process of a BH$_{3}$NH$_{3}$
molecule. The main purpose of this example is to demonstrate that our
approximate localization algorithm is equally effective when localized
orbitals form disconnected groups or a single group.  
Figure~\ref{fig:bh3nh3} shows the localized orbitals for three atomic configurations,
where the distance between B and N atoms is 1.18, 3.09, and 4.96 Bohr,
respectively and also shows the locality for orbitals
computed by Algorithms~\ref{alg:scdm}, \ref{alg:rand}, and
\ref{alg:refine} for each of the three atomic configurations. 
Here we plot an isosurface of the orbitals at a value of $2.5 \times 10^{-2}.$

We find
that Algorithm~\ref{alg:refine} automatically identifies that the
localized orbitals should be treated as two disconnected groups for the
dissociated configuration, and as one single group for the bonded
configuration.
We see that in all cases, the randomized method works
quite well on its own. Furthermore, after applying
Algorithm~\ref{alg:refine} to the output of the randomized method, the
sparsity of the orbitals is nearly indistinguishable from that of the original SCDM
algorithm. \rr{In these three scenarios the condition number of $P_{:,\CS},$ equivalently $\left(\Psi_{\CS,:}\right)^*,$ is never larger than three. Here we let $\CS \subset \CSt$ denote the final set of columns we have selected via the last QRCP in Algorithm~\ref{alg:refine} as it corresponds to the columns of the density matrix from which we ultimately build the localized basis.}

\begin{figure}[ht!]
  \centering
  \includegraphics[width = .6\textwidth]{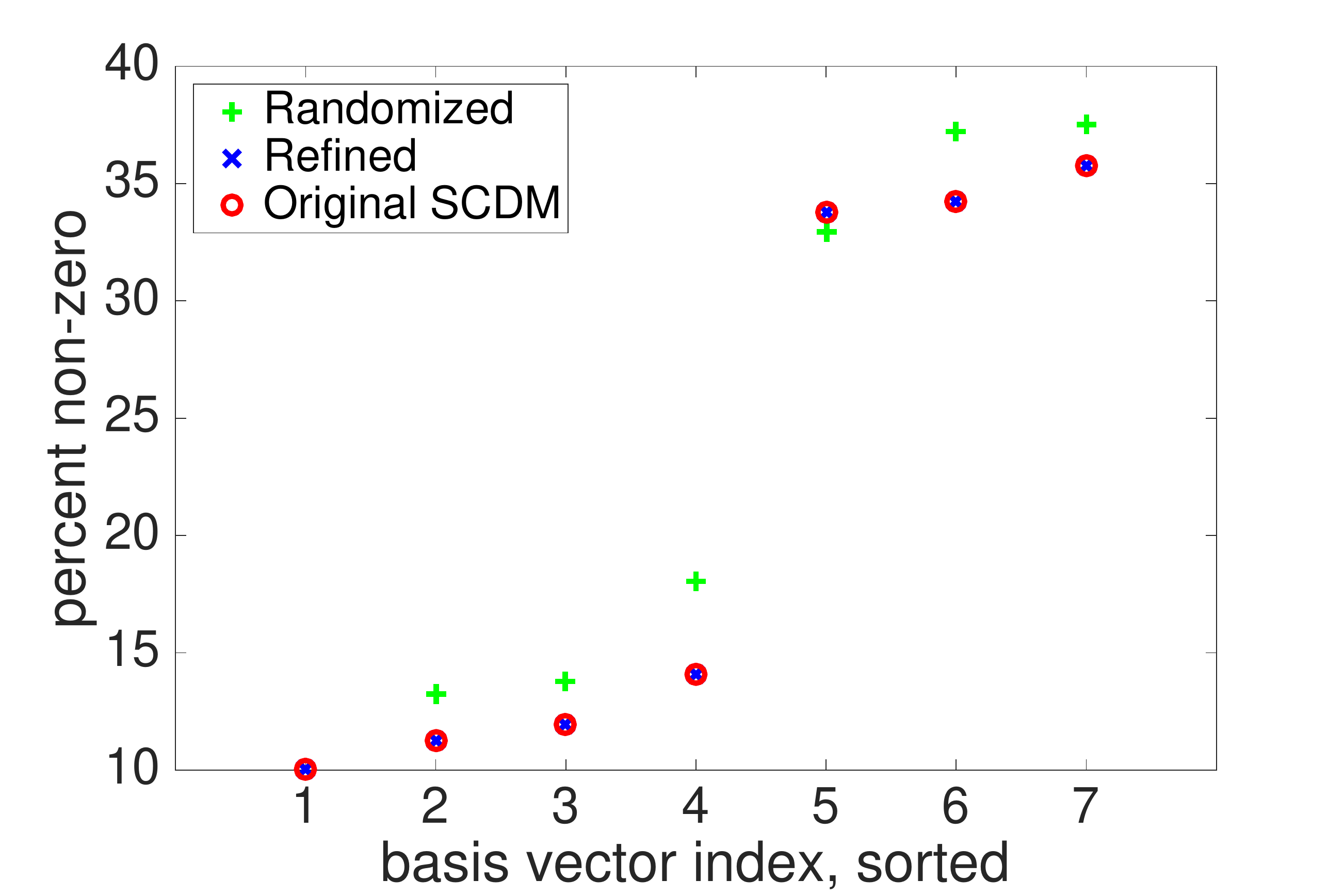}
  \includegraphics[width = .3\textwidth]{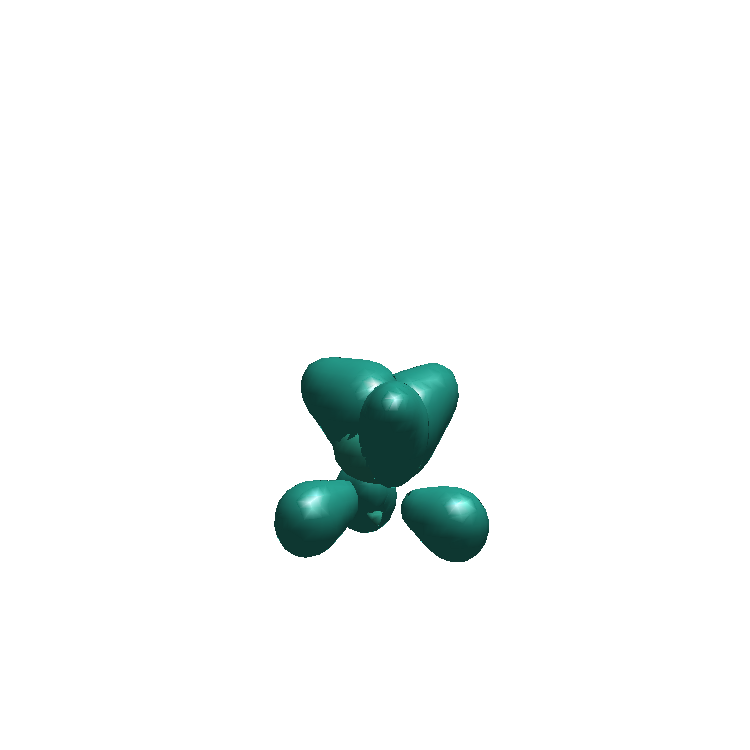}
  \includegraphics[width = .6\textwidth]{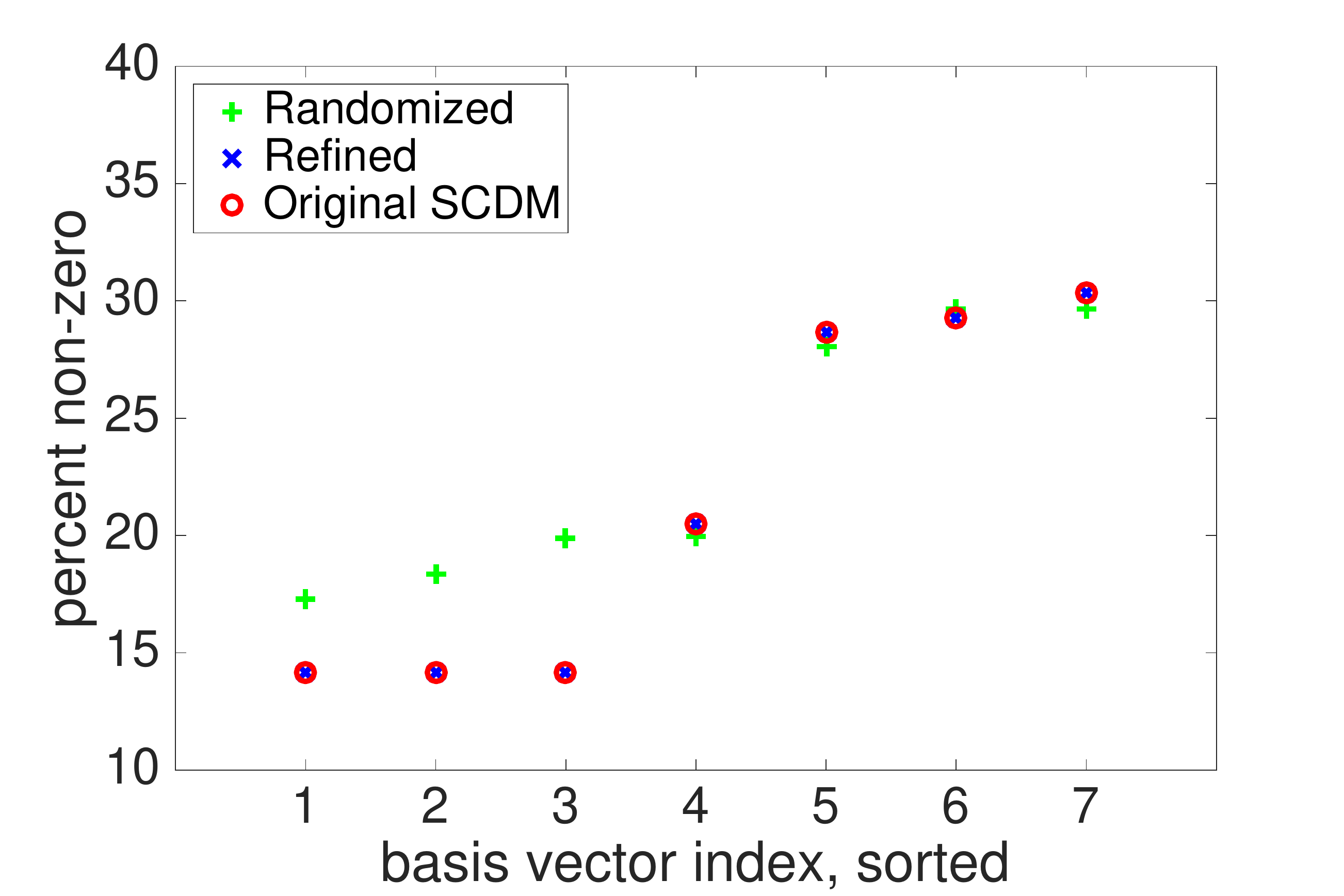}
  \includegraphics[width = .3\textwidth]{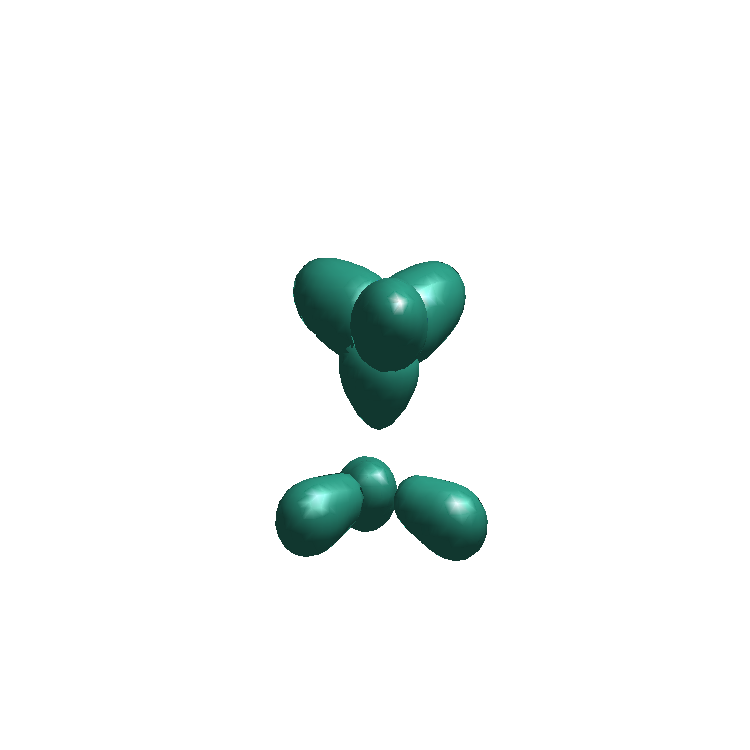}
  \includegraphics[width = .6\textwidth]{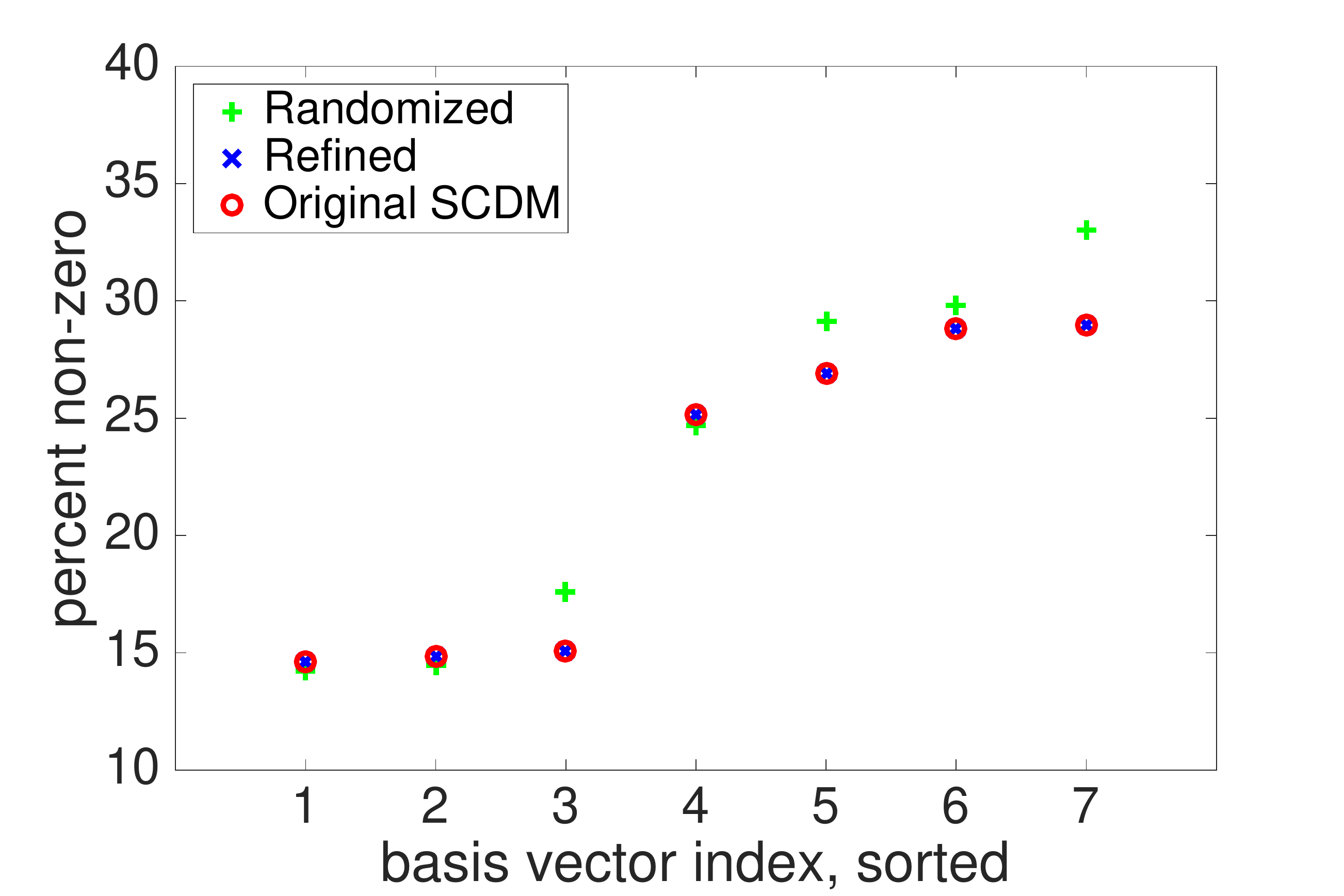}
  \includegraphics[width = .3\textwidth]{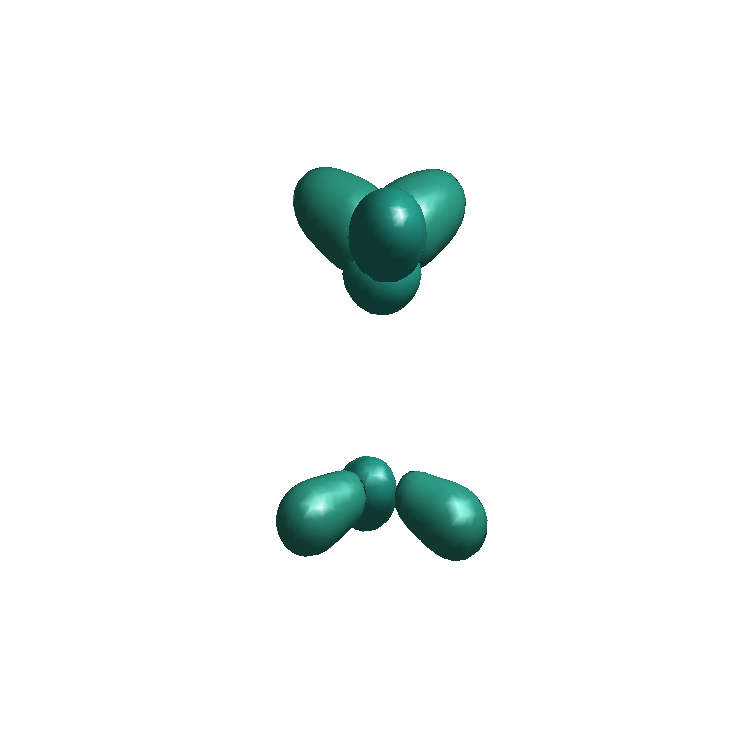}
  \caption{Sparsity (left) of localized orbitals computed by Algorithms~\ref{alg:scdm}, \ref{alg:rand}, and \ref{alg:refine} based on fraction of non-zero entries after
  truncation and orbital isosurfaces (right) at $2.5 \times 10^{-2}$ generated by Algorithm~\ref{alg:refine} when using the output of Algorithm~\ref{alg:rand} as input.
  Three different configurations moving from the bonded configuration
  (top) to the dissociated configuration (bottom).}
  \label{fig:bh3nh3}
\end{figure}

\subsection{Alkane chain}

Our second example is the alkane chain (atomic configuration shown in
Figure~\ref{fig:alkane}). Similar to the BH$_{3}$NH$_{3}$, this example
is not computationally expensive, but confirms that the approximate
localization algorithm is effective even when all localized orbitals
form one large connected group.
We demonstrate that the refinement process
still achieves the desired goal. In this example $N=820,125$ and $n_e =
100$.

\begin{figure}[ht!]
  \centering
  \includegraphics[width = .75\textwidth]{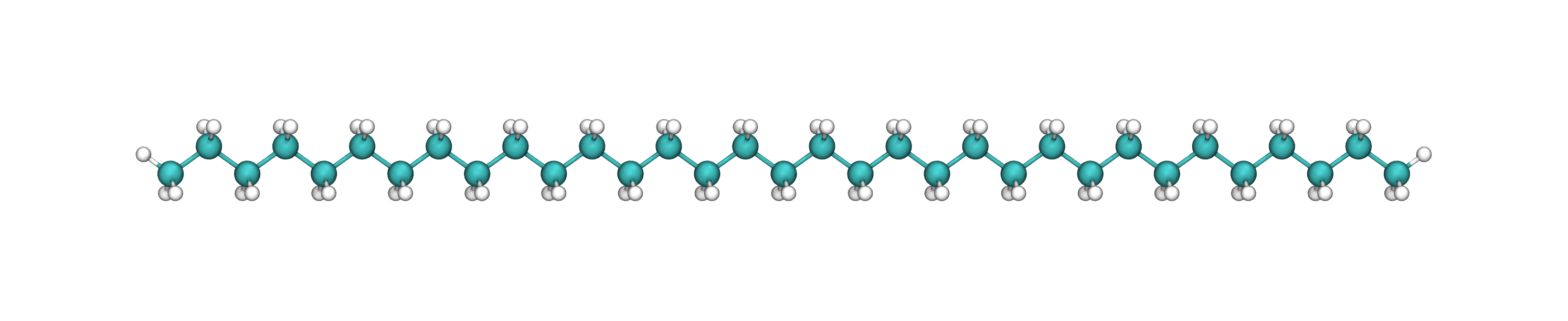}
  \caption{Atomic configuration of the alkane chain}
  \label{fig:alkane}
\end{figure}

Figure~\ref{fig:alkane_locality} shows histograms of the fraction of
non-zero entries after truncation for the randomized method, the
refinement procedure applied to the output of the randomized method, and
our original algorithm. We observe that the randomized method actually
serves to localize the orbitals rather well. However, the output is
clearly not as good as that produced by the original SCDM algorithm. However, once the
refinement algorithm is applied we see that, while not identical, the locality of the localized orbitals basically matches that of the localized orbitals generated by Algorithm~\ref{alg:scdm}. This is further
illustrated in Figure~\ref{fig:alkane_ex}, which shows isosurfaces for a localized orbital
generated by each of the three methods. \rr{Once again the columns of the density matrix we ultimately select are very well conditioned, in fact the condition number is less than two.}

\begin{figure}[ht!]
  \centering
  \includegraphics[width = 1\textwidth]{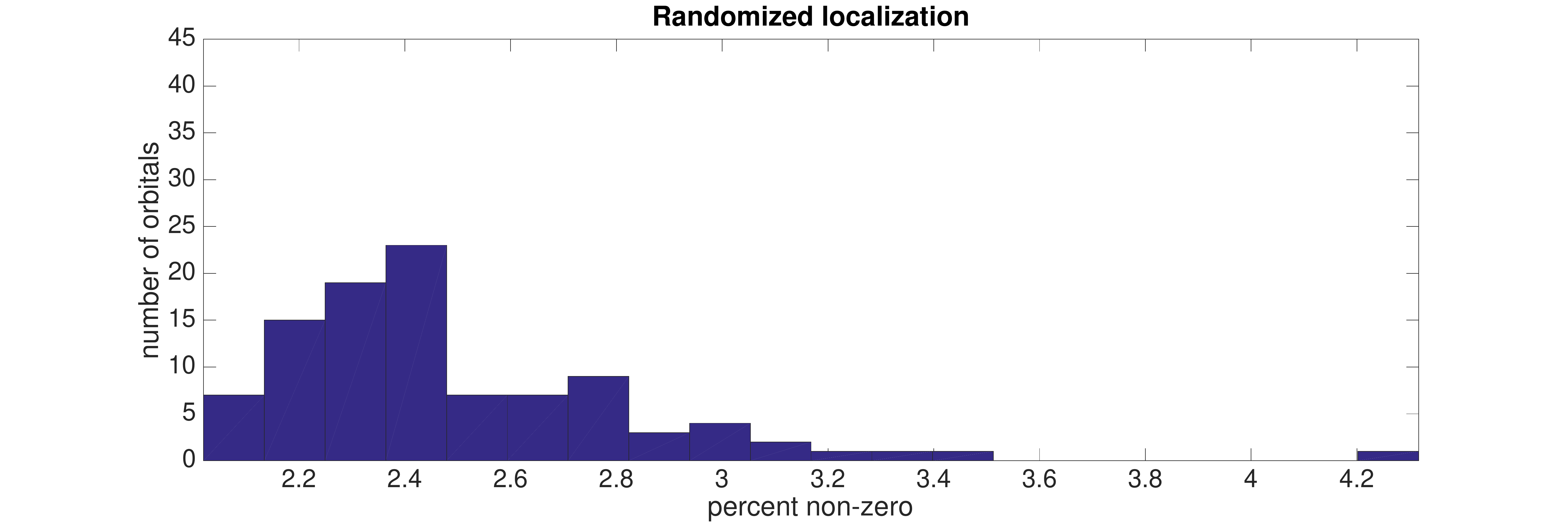}
  \includegraphics[width = 1\textwidth]{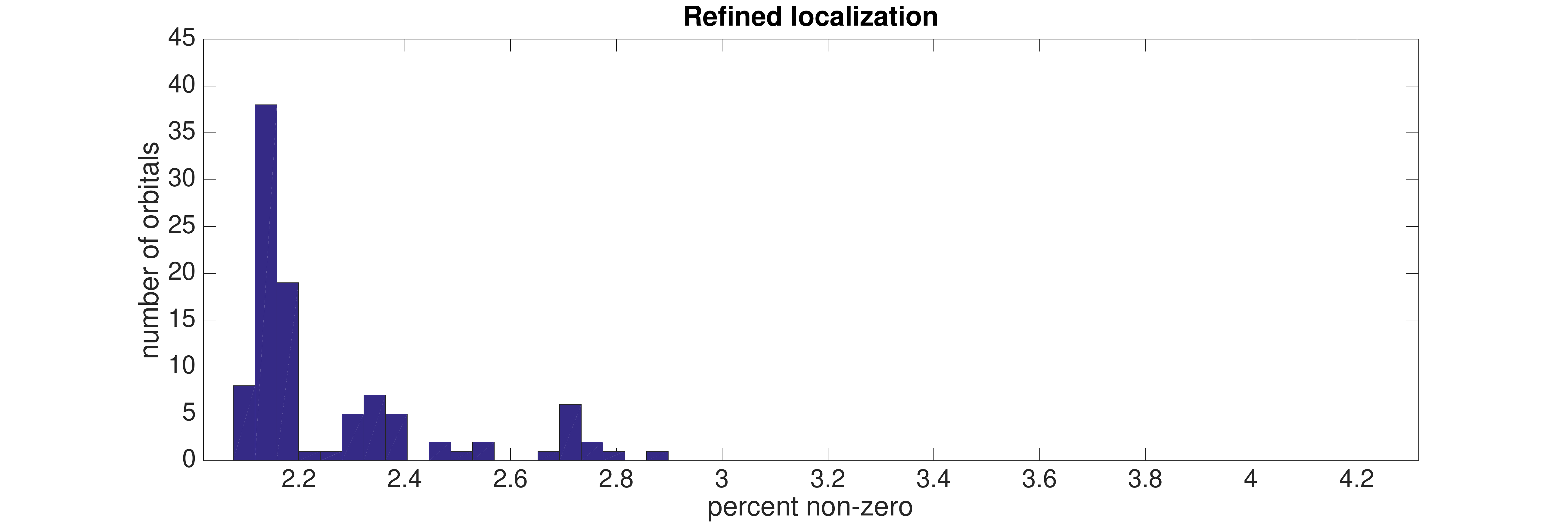}
  \includegraphics[width = 1\textwidth]{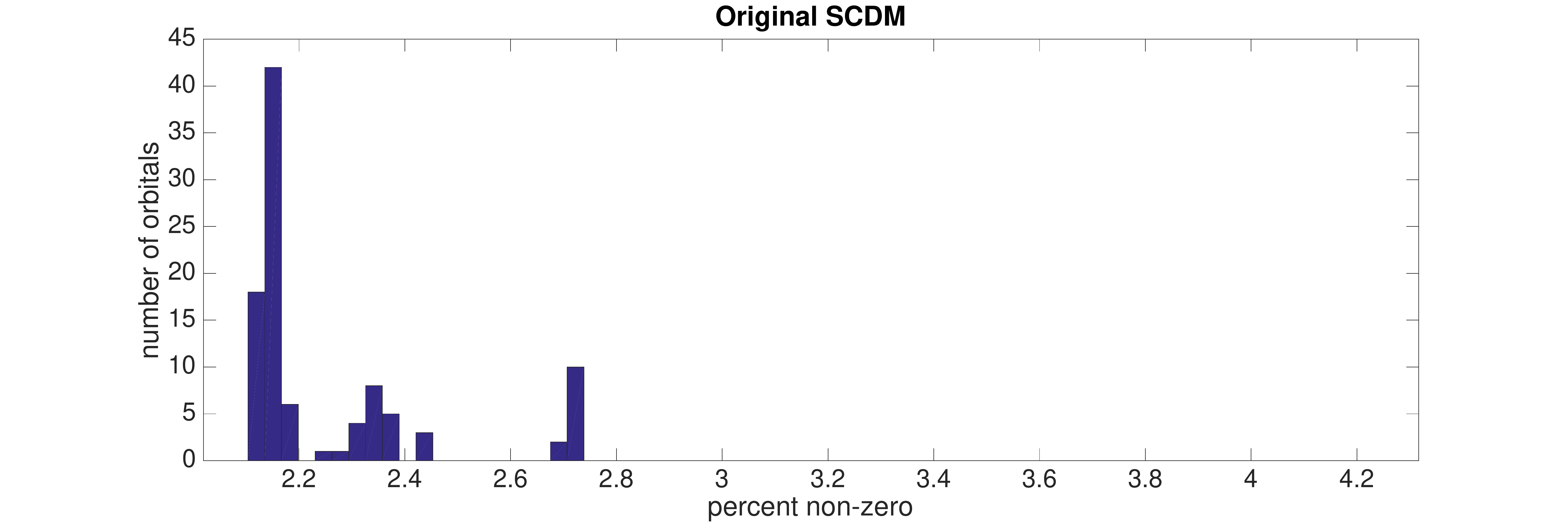}
  \caption{Histogram of localized orbitals for the alkane chain example computed by three different algorithms based on fraction of non-zero entries after truncation. (top) output of the randomized algorithm, (middle) output of the refinement algorithm applied to the output of the randomized algorithm, and (bottom) output of the original SCDM algorithm.}
  \label{fig:alkane_locality}
\end{figure}

\begin{figure}[ht!]
  \centering
  \includegraphics[width = .3\textwidth]{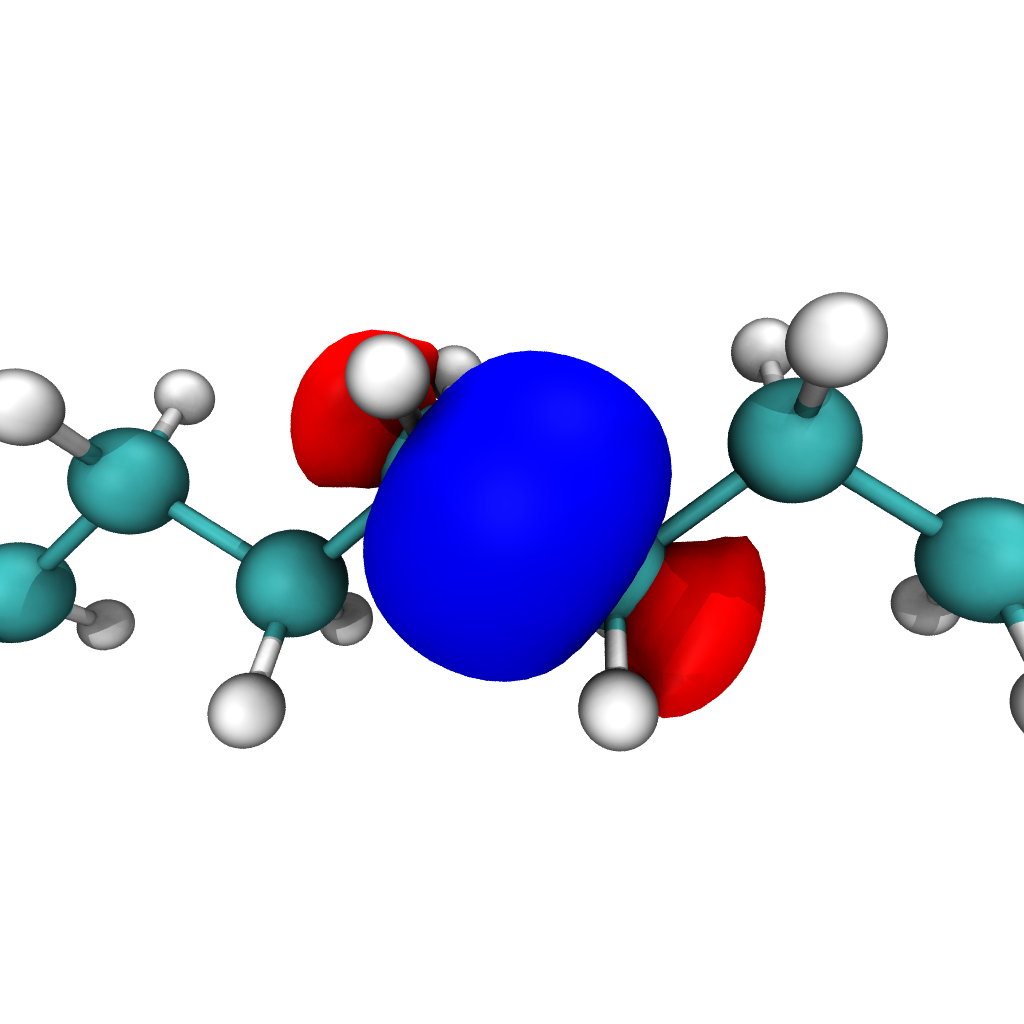}
  \includegraphics[width = .3\textwidth]{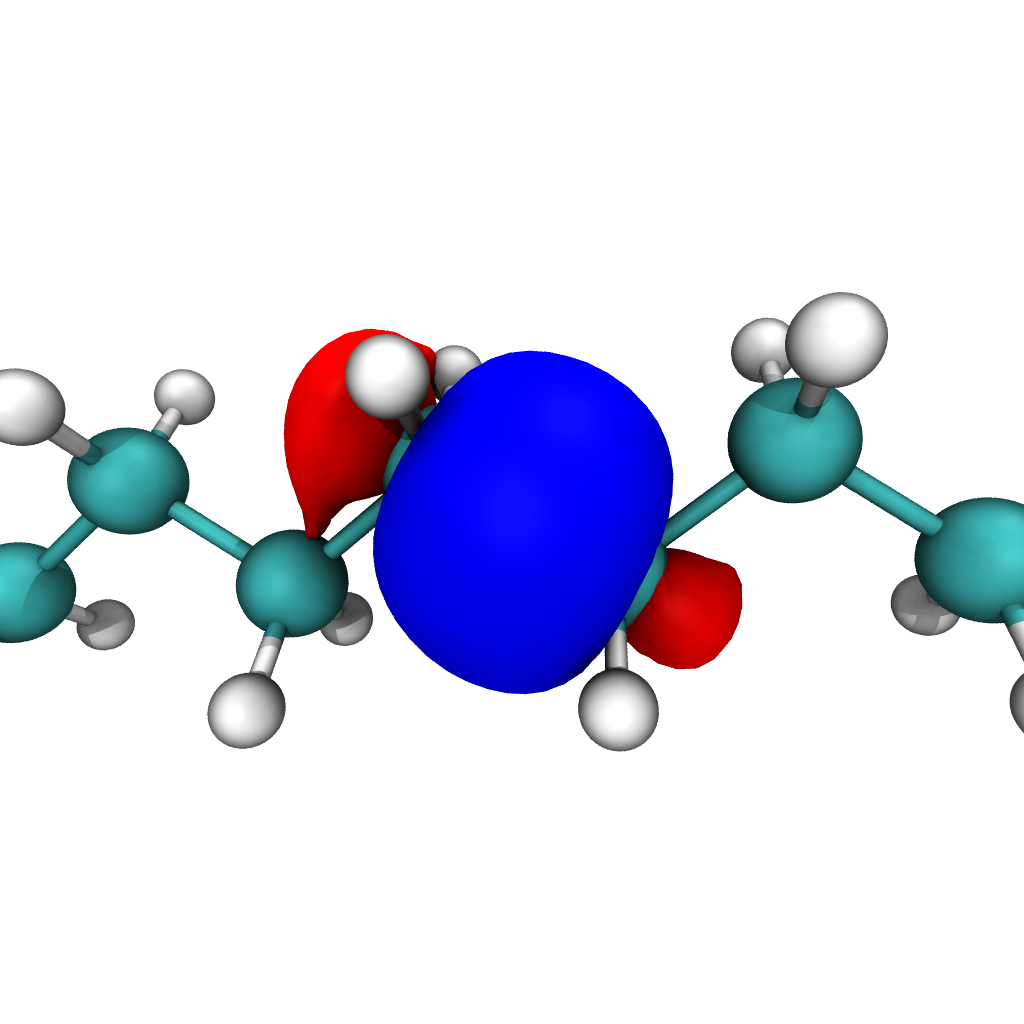}
  \includegraphics[width = .3\textwidth]{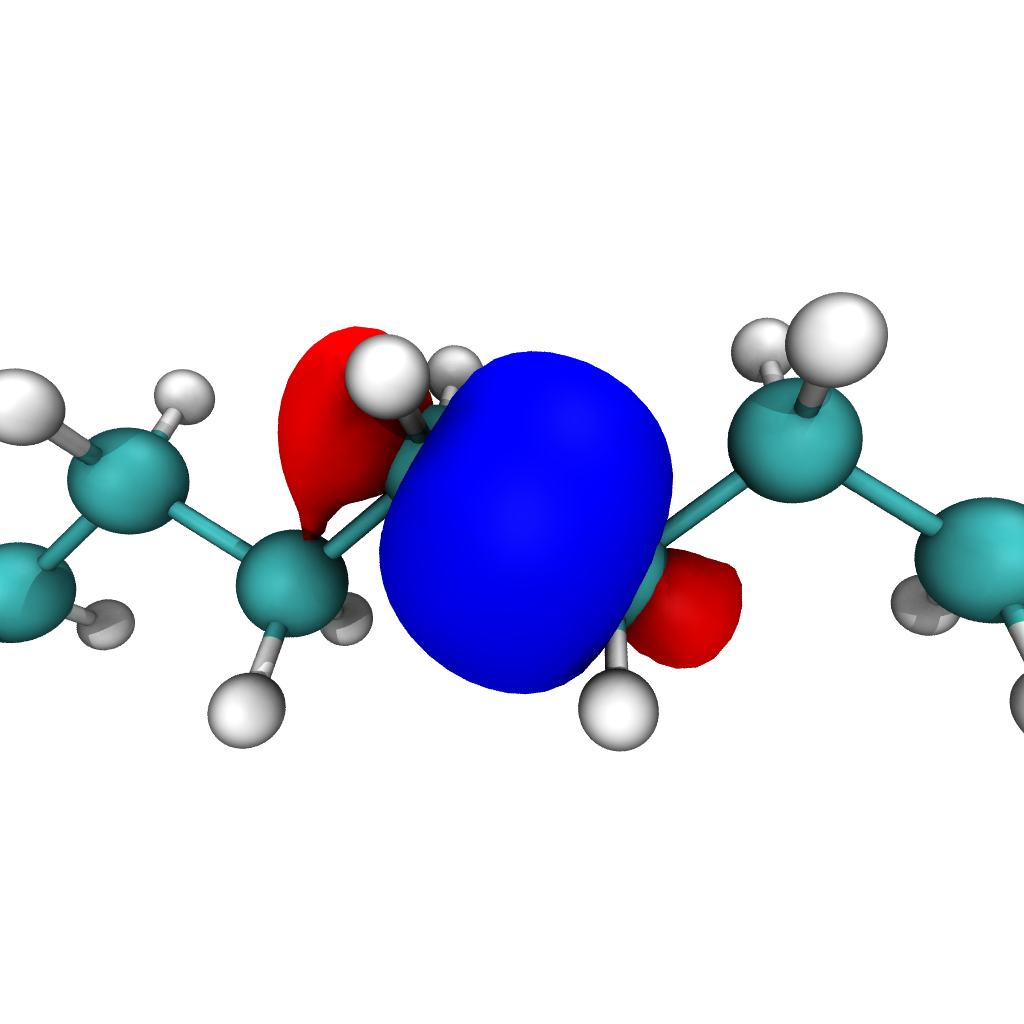}
  \caption{The same (as determined by picking those maximally correlated) orbital as generated by the randomized algorithm (left), the refinement procedure (middle), and the original SCDM algorithm (right). Here an isosurface value of $5\times 10^{-3}$ was used and the colors delineate positive and negative regions.}
  \label{fig:alkane_ex}
\end{figure}

Table~\ref{tab:alkane} illustrates the computational cost of our new
algorithms as compared to the original version. The refinement
algorithm is over nine times faster than the original algorithm, though
it cannot be perform without first approximately localizing the
orbitals.
Hence, we also provide the time of getting the orthogonal transform out
of Algorithm~\ref{alg:refine}, which is the sum of the preceding
three lines in the table. Even taking the whole pipeline into account we
see a speed up of about a factor of close to seven. 

\begin{table}
\caption{Runtime for localization algorithms as applied to an alkane chain.\label{tab:alkane}}
\centering
\begin{tabular}{|l|c|} \hline
 Operation & time (s) \\ \hhline{|=|=|}
 Matrix-matrix multiplication $\Psi Q$ & 0.2713 \\ \hline
 Randomized version, Algorithm \ref{alg:rand} & 0.0261 \\ \hline
 Refinement step, Algorithm \ref{alg:refine} & 0.7176 \\ \hline
 Total cost of our two stage algorithm & 1.0149 \\ \hline
 Original algorithm, Algorithm \ref{alg:scdm} & 6.9412 \\ \hline
\end{tabular}
\end{table}

\subsection{Water molecules}
We now consider a three dimensional system consisting of $256$ water molecules 
(part of the atomic configuration shown in Figure~\ref{fig:water}).
In this example, $N=7,381,125$ and $n_e = 1024$.

\begin{figure}[ht!]
  \centering
  \includegraphics[width = .4\textwidth]{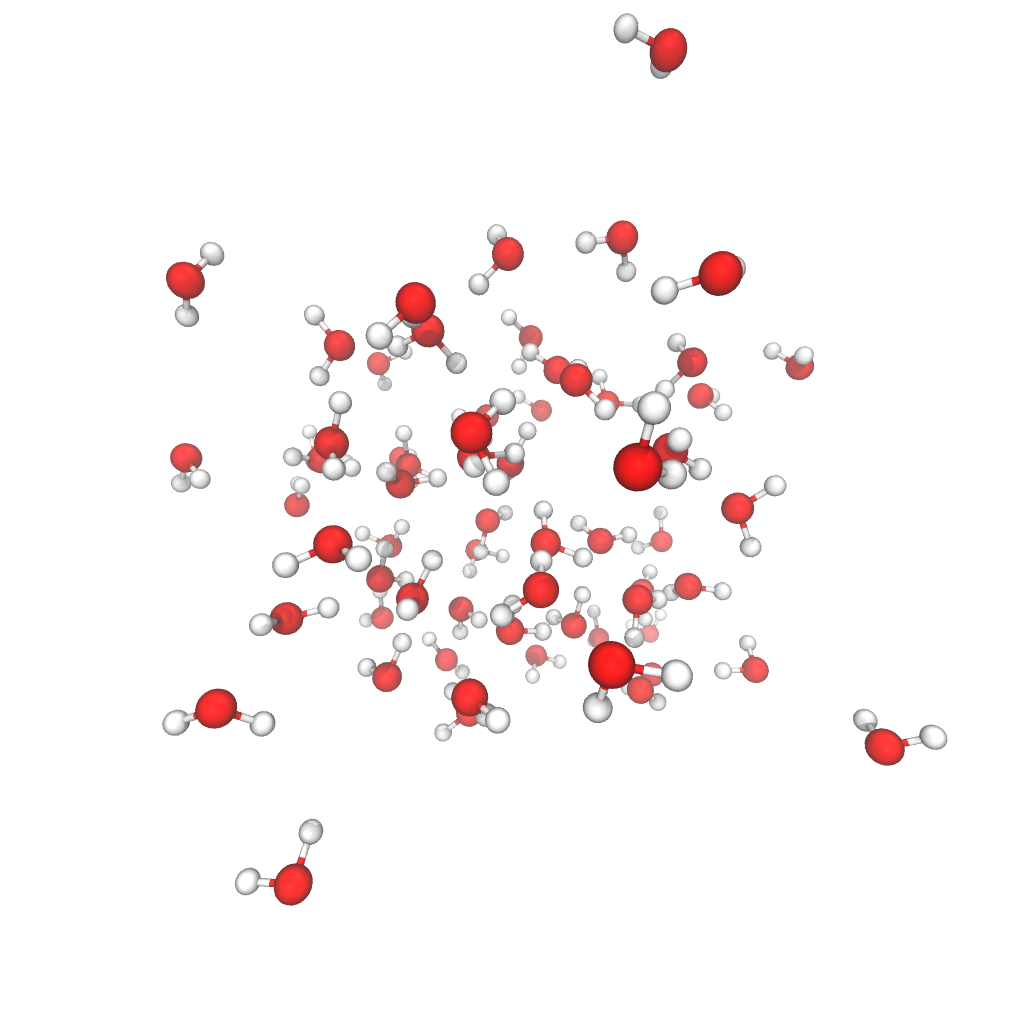}
  \caption{Part of the atomic configuration of 256 water molecules}
  \label{fig:water}
\end{figure}

Figure~\ref{fig:water_locality256} shows histograms of the fraction of
non-zero entries after truncation for the randomized method, the
refinement procedure applied to the output of the randomized method, and
the original SCDM algorithm when applied to the 256 water
molecule system. As before, the randomized method actually serves to localize the
orbitals rather well.
However, there is still visible difference between the result of the
randomized algorithm and the original SCDM algorithm. 
Application of the refinement
algorithm achieves a set of localized orbitals that broadly match the quality of the ones computed by the original SCDM algorithm. \rr{Similar to the previous two examples $P_{:,\CS}$ is very well conditioned, its condition number is once again less than two.}

\begin{figure}[ht!]
  \centering
  \includegraphics[width = 1\textwidth]{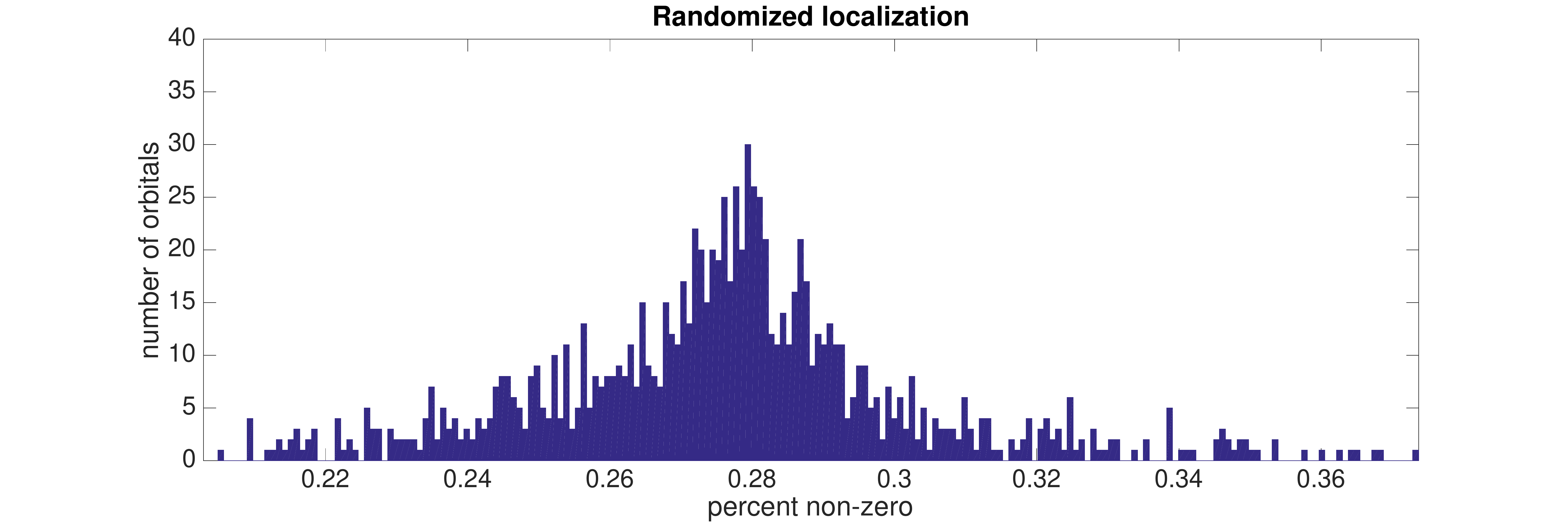}
  \includegraphics[width = 1\textwidth]{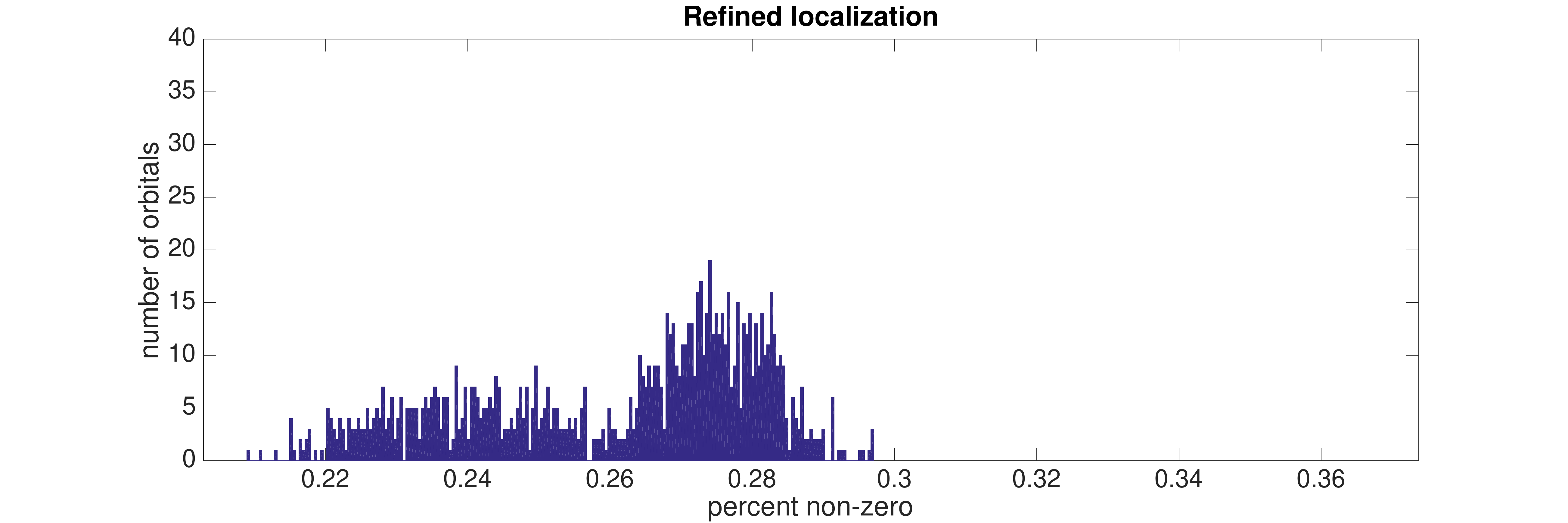}
  \includegraphics[width = 1\textwidth]{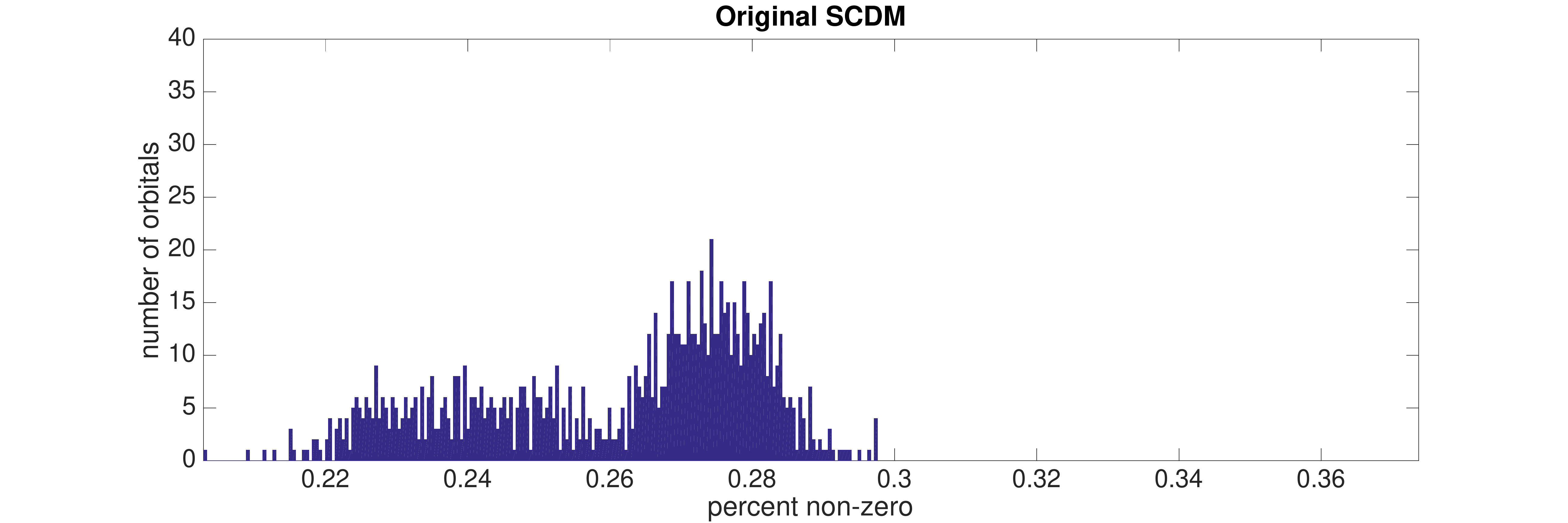}
  \caption{Histogram of localized orbitals for 256 water molecules computed by three different algorithms based on fraction of non-zero entries after truncation. (top) output of the randomized algorithm, (middle) output of the refinement algorithm applied to the output of the randomized algorithm, and (bottom) output of the original SCDM algorithm.}
  \label{fig:water_locality256}
\end{figure}

Table~\ref{tab:water256} illustrates the computational cost of our new
algorithms as compared to Algorithm~\ref{alg:scdm}. As
before, the randomized algorithm for computing is very fast and in
this case much faster than the matrix-matrix multiplication required to
construct the localized orbitals themselves. This makes the algorithm
particularly attractive in practice when $\rho$ is given and, as in many
electronic structure codes, the application of $Q$ to $\Psi$ may be
effectively parallelized. Furthermore, the complete procedure for getting orthogonal transform out of Algorithm~\ref{alg:refine}, the sum of the preceding three lines in the table, is more than 30 times faster than Algorithm~\ref{alg:scdm}.

\begin{table}
\caption{Runtime for localization algorithms as applied 256 water molecules.\label{tab:water256}}
\centering
\begin{tabular}{|l|c|c|} \hline
 Operation &  time (s) \\ \hhline{|=|=|}
 Matrix-matrix multiplication $\Psi Q$ & 47.831 \\ \hline
 Randomized version, Algorithm \ref{alg:rand} & 14.024 \\ \hline
 Refinement step, Algorithm \ref{alg:refine} & 78.361 \\ \hline
 Total cost of our two stage algorithm & 140.22 \\ \hline
 Original algorithm, Algorithm \ref{alg:scdm} & 4496.1 \\ \hline
\end{tabular}
\end{table}

Finally, we use this example to provide a comparison with the popular
Wannier90 \cite{wannier90} software package and further demonstrate the
quality of orbitals computed by our algorithms. While we have previously
been looking at sparsity after truncation to evaluate the quality of the
orbitals, we now also evaluate them based on the spread criteria Wannier90 tries to minimize. Loosely speaking this corresponds to the sum of
the variance of each orbital~\cite{MarzariVanderbilt1997}. All of the spreads here were computed by Wannier90 to ensure the same quantity was being measured in each case. Importantly, this is the quantity Wannier90 seeks to minimize and we therefore do not expect to do better under this metric. For example, given our localized orbitals as input, Wannier90 should always be able to at least slightly decrease the objective function value. For this comparison, we used the random initial guess option in Wannier90 and the default convergence tolerance of $10^{-10}.$ \rr{Notably, the random choice in Wannier90 does not correspond to a random gauge, rather an initial guess is constructed by randomly placing Gaussian functions in space.}

\begin{table}
\caption{Final spread of orbitals for 256 water molecules and time to
solution.}
\label{tab:wannier}
\centering
\begin{tabular}{|l|c|c|} \hline
 Algorithm & spread $\left(\text{\AA}^2\right)$ & time to solution (s) \\ \hhline{|=|=|=|}
 Algorithm~\ref{alg:scdm} & 589.91 & 4496.1 \\ \hline
 Randomized version, Algorithm \ref{alg:rand} & 636.60 & 14.024 \\ \hline
Two stage procedure using Algorithms~\ref{alg:rand} and~\ref{alg:refine} & 589.97 & 140.22 \\ \hline
 Wannier90 & 550.20 & 1715.2 \\ \hline
\end{tabular}
\end{table}

Table~\ref{tab:wannier} shows the spreads and time to solution for all of the algorithms. First, we observe that the spread using our two stage procedure almost exactly matches that of the output of Algorithm~\ref{alg:scdm} while being over an order of magnitude faster. Secondly, the output from our algorithms, even the randomized one, are close to the local minimum found by Wannier90. \rr{Figure~\ref{fig:wannier_spread} directly compares the spreads of our localized orbitals from the two stage algorithm and those at a local minimum found by Wannier90. As expected, there are discrepancies, but we are generally within approximately $0.05 \text{\AA}^2$ per orbital. This aligns with our expectations based on the objective function difference. For comparison, we computed a localized basis for an isolated $\text{H}_2\text{O}$ molecule. Using Algorithm~\ref{alg:scdm} yielded spreads of 0.46, 0.48, 0.56, and 0.57 $\text{\AA}^2$. Subsequently running Wannier90 to convergence yielded spreads of 0.47, 0.48, 0.55, and 0.55 $\text{\AA}^2$, which matched the results when starting with a Wannier90 generated random initial guess.} 

\begin{figure}[ht!]
  \centering
  \includegraphics[width = 1\textwidth]{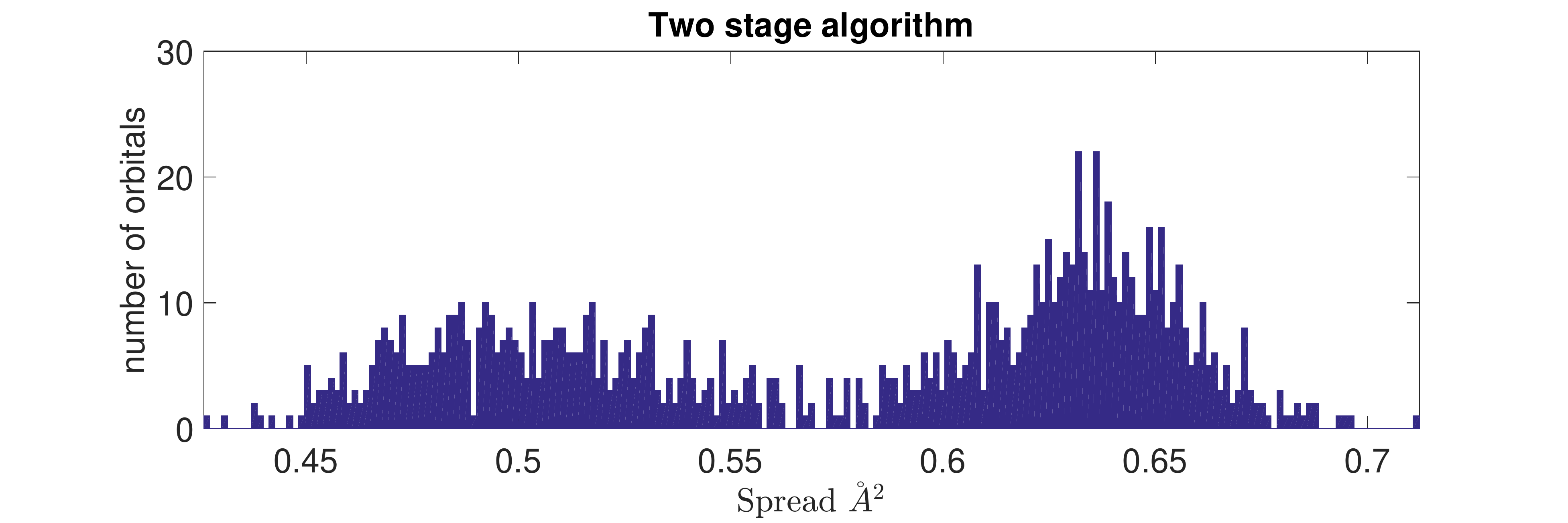}
  \includegraphics[width = 1\textwidth]{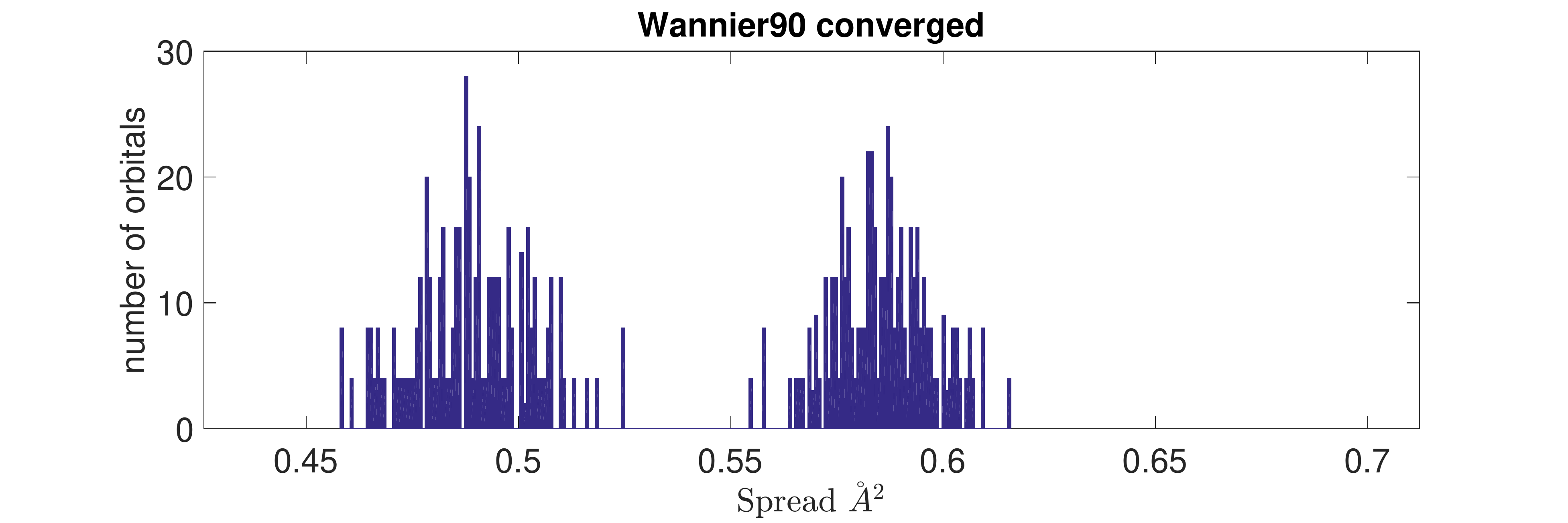}
  \caption{\rr{Histogram of localized orbitals for 256 water molecules computed by two different algorithms based on the Wannier90 spread functional. (top) output of the refinement algorithm applied to the output of the randomized algorithm, and (bottom) converged local minimum for Wannier90.}}
  \label{fig:wannier_spread}
\end{figure}

Admittedly, the time to solution for Wannier90 may depend strongly on the initial guess, \egc~in this experiment the initial spread for Wannier90 was $5339 \text{ \AA}^2$ and convergence took 20 iterations. However, a poor initial guess could result in Wannier90 failing to converge, converging to a worse local minimum, \rr{or taking longer to converge}. Our algorithms are direct and have no such dependence on an initial guess. In this experiment, each iteration of Wannier90 took roughly 85 seconds. So even two iterations costs more than our two stage algorithm and we have omitted the cost of computing the overlap matrices based on $\Psi$, which Wannier90 requires as input and we computed using Quantum ESPRESSO on 256 processors. Collectively, these results make our algorithm an attractive alternative to Wannier90. If a local minimum of the objective is desired, the output from \rr{Algorithms~\ref{alg:rand} or~\ref{alg:refine}} may be used as an algorithmically constructed initial guess for Wannier90.

\begin{remark}
\rr{Seeding Wannier90 with our two stage algorithm convergence took 16 iterations and we appear to arrive at an equivalent local minima. For this system, it would appear the area around the local minimum is quite flat and the default absolute convergence criteria is rather stringent. In fact, after one iteration starting with our algorithm Wannier90 yields localized functions whose spread is withing $0.4\%$ of the converged value. To get to within $1\%$ starting from the random initial guess takes five iterations.} 
\end{remark}

\section{Conclusion}
\label{sec:conclusion}

We have presented a two stage algorithm to accelerate the
computation of the SCDM for finding localized representation of the Kohn-Sham invariant
subspace. 
We first utilize an
algorithm based on random sampling to approximately localize the basis
and then perform a subsequent refinement step. This method can achieve
computational gains of over an order of magnitude for systems of
relatively large sizes.
Furthermore, the orbitals computed are qualitatively and quantitatively similar to those generated by the
original SCDM algorithm. Lastly, for large systems we observe that our algorithm may provide an attractive alternative to Wannier90 and, at the very least, may provide a simple method for the construction of a good intial guess.

Rapid computation of a localized basis allows for its use within various
computations in electronic structure calculation where a localized basis
may need to be computed repeatedly. This includes computations such as
molecular dynamics and time dependent density functional theory with hybrid exchange-correlation functions.
Finally, the
ideas inherent to the SCDM procedure have potentially applicability in
problems outside of Kohn-Sham DFT because the structural and behavioral
properties we exploit are not necessarily unique to the problem. Our new
algorithms would admit faster computation in such contexts as well.

\section*{Acknowledgments}
A.D. is partially supported by a National Science Foundation Mathematical Sciences Postdoctoral Research Fellowship under grant number DMS-1606277. L.L. is supported by the DOE Scientific
Discovery through the Advanced Computing (SciDAC) program, the DOE Center for Applied Mathematics for Energy Research Applications (CAMERA) program, and the Alfred P.  Sloan fellowship. L.Y. is supported by the National Science Foundation under award DMS-1328230 and DMS-1521830 and the U.S. Department of Energy’s Advanced Scientific Computing Research program under award DE-FC02-13ER26134/DE-SC0009409.
The authors thank Stanford University and the Stanford Research Computing Center for providing computational resources and support that have contributed to these research results. The authors also thank the anonymous referees for their many helpful suggestions. 

\bibliographystyle{siam}
\bibliography{fastQR}
\end{document}